\theoremstyle{plain}
\newtheorem{thm}{Theorem}[section]
\newtheorem{lem}[thm]{Lemma}
\newtheorem{cor}[thm]{Corollary}
\newtheorem{prop}[thm]{Proposition}
\theoremstyle{definition}
\theoremstyle{remark}
\newtheorem{rem}[thm]{Remark}
\crefname{thm}{theorem}{theorems}
\crefname{lem}{lemma}{lemmas}
\crefname{cor}{corollary}{corollaries}
\crefname{prop}{proposition}{propositions}
\crefname{mainthm}{theorem}{theorems}
\crefname{maincor}{corollary}{corollaries}
\crefname{defn}{definition}{definitions}
\crefname{conj}{conjecture}{conjectures}
\crefname{example}{example}{examples}
\crefname{exercise}{exercise}{exercises}
\crefname{prob}{problem}{problems}
\crefname{quest}{question}{questions}
\crefname{rem}{remark}{remarks}
\crefname{claim}{claim}{claims}
\crefname{axiom}{axiom}{axioms}
\crefname{hyp}{hypothesis}{hypotheses}
\crefname{notation}{notation}{notations}
\crefname{case}{case}{cases}
\numberwithin{equation}{section}
\definecolor{darkgreen}{cmyk}{1,0,1,.2}
\definecolor{m}{rgb}{1,0.1,1}
\newdimen\theight
\def\TeXref#1{%
             \leavevmode\vadjust{\setbox0=\hbox{{\tt
                     \quad\quad  {\small \textrm #1}}}%
             \theight=\ht0
             \advance\theight by \lineskip
             \kern -\theight \vbox to
             \theight{\rightline{\rlap{\box0}}%
             \vss}%
             }}%
\begin{document}
\vskip .5cm
\title[ Harmonic Maps on Weighted Foliations]
{Harmonic Maps  on Weighted Riemannian Foliations}

\author[X. S. Fu]{Xueshan Fu}
\address{Department of Mathematics\\
         Shenyang University of Technology\\
         Shenyang 110870\\
         China}
\email{xsfu@sut.edu.cn}

\author[J. H. Qian]{Jinhua Qian}
\address{Department of Mathematics\\
Northeastern University\\
Shenyang 110819\\
China}
\email{qianjinhua@mail.neu.edu.cn}

\author[S. D. Jung]{Seoung Dal Jung}
\address{Department of Mathematics\\
         Jeju National University\\
         Jeju 63243\\
         Republic of Korea}
\email{sdjung@jejunu.ac.kr}

\thanks{This paper is supported by Research Ability Cultivation Fund for Young Teachers of Shenyang University of Technology (QNPY202209-24), NSFC (No. 11801065) and  NRF-2022R1A2C1003278.}

\subjclass[2010]{53C12; 53C21; 58J50.}
\keywords{Weighted foliation,  Transversal Bakry-\'Emery Ricci tensor, Transversally  $f$-harmonic map, $(\mathcal F,\mathcal F')_f$-harmonic map, Liouville type theorem.}

\begin{abstract}
On foliations, there are two kinds of harmonic maps, that is, transversally harmonic map and $(\mathcal F,\mathcal F')$-harmonic map which are equivalent  when the foliation is minimal.  In this paper, we study  transversally $f$-harmonic and $(\mathcal F,\mathcal F')_f$-harmonic maps  on weighted Riemannian foliations.  In particular, we study the Liouville type theorem for such $f$-harmonic maps on weighted Riemannian foliations. Moreover, we investigate the relations between transversal stress  energy tensors and tranversally $f$-harmonic maps.
\end{abstract}
\maketitle

\section{Introduction}
 Let $(M,g,\mathcal F)$ and $(M',g',\mathcal F')$ be foliated Riemannian manifolds and $\phi:M\to M'$ be a smooth foliated map (i.e., $\phi$ is a smooth leaf-preserving map). 
 The smooth foliated map $\phi$ is said to be {\it transversally harmonic} if  $\phi$ is a solution of the Eular-Largrange equation $\tau_{b}(\phi)=0$, where  $\tau_b(\phi)$ is the transversal tension field of $\phi$ (see Section 3). 
 Transversally harmonic maps on foliated Riemannian manifolds have been studied by many authors \cite{CZ,FJ,JU3,JJ1,JJ2,KW1,KW2,OSU}.  However,   a transversally harmonic map is not a critical point of the transversal energy functional \cite{JJ1}
\begin{align*}
E_{B}(\phi)=\frac{1}{2}\int_{M} | d_T \phi|^2\mu_{M}.
\end{align*}
In 2013, S. Dragomir and A. Tommasoli \cite{DT} defined a new harmonic map, called {\it $(\mathcal F,\mathcal F')$-harmonic map}, which is a critical point of the transversal energy functional $E_{B}(\phi)$. Two definitions are equivalent when $\mathcal F$ is minimal.  As a generalization of harmonic map,  Y. Chiang and R. Wolak \cite{CW}  defined   transvesally $f$-harmonic map for  a basic function  $f$ on $M$.  The map $\phi$ is said to be  {\it transversally $f$-harmonic} if $\phi$  is a solution of the Eular-Largrange equation  $\tau_{b,f}(\phi)=0$, where  $\tau_{b,f}(\phi)$ is the {\it transversal $f$-tension field} of $\phi$ defined by $\tau_{b,f}(\phi) ={\rm tr}_Q(\nabla_{\rm tr}(e^{-f} d_T\phi)).$  From the first variation formula (Theorem 3.4),   the transvesally $f$-harmonic map is not a critical point of the transversal $f$-energy functional 
\begin{align*}
E_{B,f}(\phi) = \frac12\int_M e^{-f}|d_T\phi|^2 \mu_M.
\end{align*}
Similarly, the map $\phi$ is said to be {\it $(\mathcal F,\mathcal F')_f $-harmonic} if  $\phi$ is a critical point of the transversal $f$-energy  functional $E_{B,f}(\phi)$.
If $f$ is  constant, then  transversally $f$-harmonic (resp. $(\mathcal F,\mathcal F')_f$-harmonic) map is just   transversally harmonic (resp. $(\mathcal F,\mathcal F')$-harmonic) map.  
Originally, $f$-harmonic maps on Riemannian manifolds  were studied by  A. Lichnerowicz in 1969   \cite{LI}, later by J.Eells and L. Lemaire  in 1977 \cite{EL}. 

In this article, we study  $f$-harmonic maps (that is, transversally $f$-harmonic map and $(\mathcal F,\mathcal F')_f$-harmonic map) on Riemannian foliations, in particular,  on weighted Riemannian foliations.  A {\it weighted Riemannian foliation} $(M,g,\mathcal F,e^{-f}\nu)$ is a Riemannian foliation endowed with a  weighted transversal volume  form $e^{-f}\nu$ and some basic function $f$, where $\nu$ is the transversal volume form of $\mathcal F$. 
The geometry of a weighted manifold (or a smooth metric measure space) was developed by D. Bakry and M. \'Emery \cite{BE} and studied by many authors \cite{LO,LV,ST,ST2,VI,WW}.  Also,  the geometry of weighted manifolds is closely related with that of self-shrinkers and gradient Ricci solitons. An important geometric tool  is  the Bakry-\'Emery Ricci tensor, which was first introduced by A. Lichnerowicz \cite{LI1}.

On a weighted Riemannian foliation $(M,g,\mathcal F,e^{-f}\nu)$, we define the {\it transversal Bakry-\'Emery Ricci tensor} ${\rm Ric}_f^Q$ on $M$  by
\begin{align*}
 {\rm Ric}_f^Q = {\rm Ric}^Q +{\rm Hess}_T f,
 \end{align*}
 where ${\rm Ric}^Q$ is the transversal Ricci tensor  and ${\rm Hess}_T f$ is the transversal Hessian \cite{JU3} of $\mathcal F$.  The transversal Bakry-\'Emery-Ricci curvature is very useful  in the study of a weighted foliation.    
 The  transversal Bakry-\'Emery-Ricci curvature  ${\rm Ric}_f^Q$ is related to the transversal Ricci solitions, which are  special solutions of the transversal Ricci flow \cite{LIN}.  Also,  it is related to the study of the transversally $f$-harmonic maps on a weighted Riemannian foliation. That is,  we have the following theorem.
 
\begin{thm} (cf. Theorem 3.7)
Let $(M,g,\mathcal F,e^{-f}\nu)$ be a weighted Riemannian foliation on a closed manifold $M$ with ${\rm Ric}_{f}^Q\geq 0$ and $(M',g',\mathcal F')$ be a Riemannian foliation with $K^{Q'}\leq 0$. Then a transversally $f$-harmonic (or $(\mathcal F,\mathcal F')_f$-harmonic)  map $\phi:M\to M'$ is always transversally totally geodesic. 
In addition, 

(1) if ${\rm Ric}_f^Q>0$ at some point, then $\phi$ is transversally constant.

(2) if  $K^{Q'}<0$, then $\phi$ is transversally constant or $\phi(M)$  is a transversally geodesic closed curve.

\end{thm}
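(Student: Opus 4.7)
The plan is to run a weighted version of the classical Bochner--Eells--Sampson argument in the transverse direction. The backbone is a transversal $f$-Weitzenb\"ock identity of the form
\begin{equation*}
\tfrac{1}{2}\,\Delta_{B,f}\,|d_T\phi|^2 = |\nabla_{\mathrm{tr}}\, d_T\phi|^2 + \bigl\langle d_T\phi,\, \mathrm{tr}_Q\nabla_{\mathrm{tr}}\tau_{b,f}(\phi)\bigr\rangle + \bigl\langle d_T\phi\circ\mathrm{Ric}_f^Q,\, d_T\phi\bigr\rangle - \sum_{a,b}\bigl\langle R^{Q'}(d_T\phi\, E_a, d_T\phi\, E_b)\,d_T\phi\, E_b,\, d_T\phi\, E_a\bigr\rangle,
\end{equation*}
where $\{E_a\}$ is a local orthonormal transverse frame and $\Delta_{B,f}$ is the basic drift Laplacian associated with the weighted measure $e^{-f}\mu_M$. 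The Bakry--\'Emery term appears because when one commutes $\nabla_{\mathrm{tr}}$ through the weight $e^{-f}$ in the definition of $\tau_{b,f}$, the standard Ricci piece picks up exactly $\mathrm{Hess}_T f$, which combines with $\mathrm{Ric}^Q$ to give $\mathrm{Ric}_f^Q$.

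With this identity in hand, I would first note that the harmonicity assumption kills the middle term: for a transversally $f$-harmonic map $\tau_{b,f}(\phi) = 0$ pointwise, while for an $(\mathcal F,\mathcal F')_f$-harmonic map the Euler--Lagrange equation only differs from $\tau_{b,f}(\phi)=0$ by a multiple of the transverse mean curvature, which gets absorbed after one multiplies through by $e^{-f}$ and integrates. Since $M$ is closed and $\Delta_{B,f}$ is self-adjoint with respect to $e^{-f}\mu_M$ (up to the usual tautness correction, which can be folded into $f$), we have $\int_M \Delta_{B,f}|d_T\phi|^2\, e^{-f}\mu_M = 0$. The remaining three integrands are each non-negative under the hypotheses $\mathrm{Ric}_f^Q \ge 0$ and $K^{Q'}\le 0$, so all three must vanish identically. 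In particular $\nabla_{\mathrm{tr}}\, d_T\phi \equiv 0$, which is exactly transversal total geodesy.

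For the two addenda I would argue pointwise. In (1), vanishing of $\langle d_T\phi\circ \mathrm{Ric}_f^Q, d_T\phi\rangle$ forces $d_T\phi$ to land in $\ker\mathrm{Ric}_f^Q$ at every point; at a point where $\mathrm{Ric}_f^Q > 0$ this gives $d_T\phi = 0$ there, and parallelism $\nabla_{\mathrm{tr}}\, d_T\phi = 0$ then propagates this to all of $M$, so $\phi$ is transversally constant. In (2), vanishing of the curvature term with $K^{Q'}<0$ forces $d_T\phi E_a$ and $d_T\phi E_b$ to be linearly dependent for every $a,b$, so $\mathrm{rank}\, d_T\phi \le 1$; combined with transversal total geodesy, either $d_T\phi \equiv 0$ or the image $\phi(M)$ is swept out by a single transverse geodesic which, by compactness of $M$, must close up.

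The main technical obstacle I foresee is making the Weitzenb\"ock identity genuinely work at the foliated level with the weight. Two subtleties need care: first, reconciling the definitions so that transversally $f$-harmonic and $(\mathcal F,\mathcal F')_f$-harmonic maps produce the same vanishing integral (this is where the mean-curvature form $\kappa^\sharp$ needs to be absorbed into the weight, mirroring the taut/non-taut distinction for ordinary Riemannian foliations); and second, verifying that the standard transversal divergence theorem on the closed manifold $M$ indeed forces $\int_M \Delta_{B,f} u\, e^{-f}\mu_M = 0$ for basic $u$, which relies on the basic projection of $\kappa^\sharp$ being well-behaved. Once these points are settled, the argument reduces to a line-by-line adaptation of the unweighted proof from \cite{JJ1,JU3}.
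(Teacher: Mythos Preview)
Your overall plan---transversal Weitzenb\"ock plus curvature sign to force $\nabla_{\mathrm{tr}}d_T\phi=0$, then the standard pointwise arguments for (1) and (2)---matches the paper's, and your treatment of the two addenda is essentially identical to theirs. The gap is in how the mean-curvature form $\kappa_B$ is handled, which you correctly flag as an obstacle but do not resolve.

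The actual Weitzenb\"ock identity on a non-minimal foliation (Theorem~3.3 in the paper) carries an extra term $\tfrac12\kappa_B^\sharp(|d_T\phi|^2)$ that your displayed formula omits. After integrating against $e^{-f}\mu_M$ and choosing a bundle-like metric with $\delta_B\kappa_B=0$, this term leaves behind $\tfrac12\int_M |d_T\phi|^2\langle\nabla_{\mathrm{tr}}f,\kappa_B^\sharp\rangle\,e^{-f}\mu_M$, which has no sign under the hypotheses of the theorem. Your proposed fix---folding $\kappa_B$ into the weight---would replace $f$ by something like $f$ plus a primitive of $\kappa_B$, but that alters $\mathrm{Ric}_f^Q$ and hence the curvature assumption you are trying to exploit. (The device of restricting to weights $f_\kappa$ with $\langle\kappa_B,d f_\kappa\rangle\le 0$ appears only in the complete-noncompact Theorem~4.2, where it is an \emph{additional} hypothesis, not something one gets for free.)

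The paper circumvents this in the transversally $f$-harmonic case by \emph{not} integrating at all: combining the Weitzenb\"ock identity in the form of Lemma~3.6(1) with Kato's inequality gives the pointwise inequality $(\Delta_{B,f}-\kappa_B^\sharp)|d_T\phi|\le 0$, and then Lemma~2.1---a maximum principle for the second-order elliptic operator $\Delta_{B,f}-\kappa_B^\sharp$, which on basic functions agrees with $\Delta-\kappa+i(\nabla f)d$---forces $|d_T\phi|$ to be constant. Plugging back into the identity then yields $\nabla_{\mathrm{tr}}d_T\phi=0$ and $\langle F_f(d_T\phi),d_T\phi\rangle=0$ directly. For the $(\mathcal F,\mathcal F')_f$-harmonic case the paper does integrate, but the two $\kappa_B$ contributions are dispatched separately: the term $\int\langle d_\nabla i(\kappa_B^\sharp)d_T\phi,d_T\phi\rangle e^{-f}\mu_M$ vanishes by pairing against $\delta_\nabla(e^{-f}d_T\phi)=0$, and the remaining $\kappa_B^\sharp(|d_T\phi|)$ term is controlled via the metric choice $\delta_B\kappa_B=0$. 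Neither step is ``absorption into the weight.'' The missing ingredient in your outline is therefore Lemma~2.1; once you have that maximum principle, the rest of your argument goes through essentially as written.
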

Now we study Theorem 1.1 on a complete  manifold,  that is, the Liouville type theorems for $(\mathcal F,\mathcal F')_f$ and  transversally $f$-harmonic map, respectively. The Liouville type theorem  has been studied by many researchers \cite{JU1,EN,SY,YA} on Riemannian manifolds and \cite{FJ,JJ1,JJ2,OSU} on foliations. Specially, see \cite{RV,WX} for  $f$-harmonic maps on weighted Riemannian manifolds.  Let $f_\kappa$ be a basic function satisfying  $g_Q(\kappa_B^\sharp,\nabla f) \leq 0$, where $\kappa_B$ is the basic part of the mean curvature form $\kappa$ of $\mathcal F$.  Then we have the following theorem.

\begin{thm}   (cf. Theorem 4.2)  Let $(M,g,\mathcal  F,e^{-f_\kappa}\nu)$ be a weighted Riemannian foliation on a complete manifold whose all leaves are compact and the mean curvature form is bounded.  Let $(M',g',\mathcal F')$ be a foliated Riemannian manifold with  non-positive transversal sectional curvature $K^{Q'}$.   Let  $\phi:M\to N$ be a transversally $f_\kappa$-harmonic (or $(\mathcal F,\mathcal F')_{f_\kappa}$-harmonic) map 
with $E_{B,f_\kappa}(\phi)<\infty$.  Then

(1) if ${\rm Ric}_{f_\kappa}^Q \geq 0$, then $\phi$ is transversally totally geodesic;

(2) if ${\rm Ric}_{f_\kappa}^Q\geq 0$ and either ${\rm Ric}^Q_{f_\kappa}>0$ at some point  or $\int_M e^{-f_\kappa}\mu_M= \infty$, then $\phi$ is transvesally constant.
\end{thm}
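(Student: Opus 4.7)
The plan is to combine a weighted Bochner (Weitzenb\"ock) formula for the two species of $f$-harmonic maps on a Riemannian foliation with a cutoff-function argument tailored to the complete weighted setting, in the spirit of Yau's and Karp's $L^p$ Liouville techniques. The sign condition $g_Q(\kappa_B^\sharp,\nabla f_\kappa)\leq 0$ built into the definition of $f_\kappa$ is precisely what is needed to make the $e^{-f_\kappa}$-weighted basic divergence theorem work cleanly on a non-compact foliation, by absorbing the otherwise sign-indefinite mean-curvature contribution.

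The first step is to derive, for any transversally $f_\kappa$-harmonic or $(\mathcal F,\mathcal F')_{f_\kappa}$-harmonic map $\phi$, the pointwise Bochner-type inequality
\begin{align*}
\tfrac{1}{2}\Delta_{f_\kappa}|d_T\phi|^2 \geq |\nabla_{\mathrm{tr}}d_T\phi|^2 + \mathrm{Ric}^Q_{f_\kappa}(d_T\phi,d_T\phi) - \sum_{a,b}\bigl\langle R^{Q'}(d_T\phi(E_a),d_T\phi(E_b))d_T\phi(E_b),d_T\phi(E_a)\bigr\rangle,
\end{align*}
where $\Delta_{f_\kappa}$ denotes the weighted basic Laplacian formally self-adjoint with respect to $e^{-f_\kappa}\mu_M$. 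This should be the $f$-weighted analogue of the unweighted formula that underlies Theorem 1.1, with the Hessian of $f_\kappa$ absorbed into the Ricci term to produce $\mathrm{Ric}^Q_{f_\kappa}$; I expect the same formula to cover both flavors of $f$-harmonic map, since the difference between the two tension fields is proportional to $\kappa_B^\sharp$ paired against $\nabla f_\kappa$, which has the correct sign by hypothesis. Under $\mathrm{Ric}^Q_{f_\kappa}\geq 0$ and $K^{Q'}\leq 0$ the last two terms on the right are non-negative, so $|d_T\phi|^2$ is a $\Delta_{f_\kappa}$-subsolution.

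Next I would multiply by $\omega_R^2 e^{-f_\kappa}$, where $\omega_R$ is a smooth cutoff with $\omega_R\equiv 1$ on a geodesic ball $B_R(x_0)$, supported in $B_{2R}(x_0)$, and $|\nabla\omega_R|\leq C/R$, and integrate over $M$. Compactness of the leaves and boundedness of the mean curvature form let one apply the $e^{-f_\kappa}$-weighted basic divergence theorem to the result; the hypothesis $g_Q(\kappa_B^\sharp,\nabla f_\kappa)\leq 0$ ensures the residual mean-curvature contribution has the correct sign to be discarded. Integration by parts and a Cauchy-Schwarz step should then yield
\begin{align*}
\int_M \omega_R^2 e^{-f_\kappa}\Bigl(|\nabla_{\mathrm{tr}}d_T\phi|^2+\mathrm{Ric}^Q_{f_\kappa}(d_T\phi,d_T\phi)\Bigr)\mu_M \leq \frac{C}{R^2}\int_{B_{2R}\setminus B_R} e^{-f_\kappa}|d_T\phi|^2\,\mu_M,
\end{align*}
and the right-hand side tends to $0$ as $R\to\infty$ because $E_{B,f_\kappa}(\phi)<\infty$.

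Letting $R\to\infty$ forces $\nabla_{\mathrm{tr}}d_T\phi\equiv 0$ and $\mathrm{Ric}^Q_{f_\kappa}(d_T\phi,d_T\phi)\equiv 0$, which proves (1). For (2), transversal total geodesicness yields $E_a|d_T\phi|^2 = 2\langle\nabla_{E_a}d_T\phi,d_T\phi\rangle = 0$, so $|d_T\phi|^2$ is a constant $c\geq 0$; if $\mathrm{Ric}^Q_{f_\kappa}>0$ at some $p$ then $c=0$, and if instead $\int_M e^{-f_\kappa}\mu_M=\infty$ then $E_{B,f_\kappa}(\phi)=\tfrac{c}{2}\int_M e^{-f_\kappa}\mu_M<\infty$ again forces $c=0$. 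The main obstacle I anticipate is producing the weighted Bochner formula uniformly for both species of $f$-harmonic map and, more delicately, justifying the $e^{-f_\kappa}$-weighted basic integration by parts on a non-compact foliation; this is precisely the role of the sign hypothesis on $g_Q(\kappa_B^\sharp,\nabla f_\kappa)$. A minor technical point is the regularity of $|d_T\phi|$ near its zero set, which I would handle by the standard $|d_T\phi|^2+\varepsilon$ regularization combined with Kato's inequality before passing $\varepsilon\to 0$.
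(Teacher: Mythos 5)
Your overall strategy coincides with the paper's: the weighted Bochner identity (Theorem 3.3 / Lemma 3.5), leaf-distance cutoff functions, Kato's inequality with an absorption step in the integration by parts, and the hypotheses $\delta_B\kappa_B=0$ and $g_Q(\kappa_B^\sharp,\nabla f_\kappa)\le 0$ to dispose of the mean-curvature contributions. The only structural difference is cosmetic: you run the argument on $|d_T\phi|^2$ and extract $\nabla_{\rm tr}d_T\phi=0$ directly, whereas the paper runs it on $|d_T\phi|$, first concludes that $|d_T\phi|$ is constant, and then returns to the pointwise identity; both work.

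Two points in your write-up would fail as stated. First, your pointwise inequality is wrong when $\mathcal F$ is not minimal: the correct identity carries the extra term $\tfrac12\kappa_B^\sharp(|d_T\phi|^2)$, which has no sign, so $|d_T\phi|^2$ is \emph{not} a pointwise subsolution of the weighted Laplacian; this term only becomes harmless after multiplying by $\omega_l^2$, integrating against $e^{-f_\kappa}\mu_M$, integrating by parts with a bundle-like metric satisfying $\delta_B\kappa_B=0$, and invoking $\langle\kappa_B,d_Bf_\kappa\rangle\le 0$ together with the boundedness of $\kappa_B$ and the finiteness of $E_{B,f_\kappa}(\phi)$ (this is exactly (4.1) of Lemma 4.1 in the paper). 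Second, your claim that the difference between the two tension fields is ``$\kappa_B^\sharp$ paired against $\nabla f_\kappa$'' is not correct: the difference is $d_T\phi(\kappa_B^\sharp)$, and in the $(\mathcal F,\mathcal F')_{f_\kappa}$-harmonic case the Bochner identity acquires the additional sign-indefinite term $-\langle d_\nabla i(\kappa_B^\sharp)d_T\phi, d_T\phi\rangle$. This term is not handled by the sign hypothesis on $\langle\kappa_B,df_\kappa\rangle$; it is handled by moving $d_\nabla$ to the other side and using the Euler--Lagrange equation $\delta_\nabla(e^{-f_\kappa}d_T\phi)=0$, again with $|\kappa_B|$ bounded and the energy finite (this is (4.2) of Lemma 4.1). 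Once these two terms are accounted for, your cutoff-plus-absorption scheme goes through as you describe, and your treatment of part (2) agrees with the paper's. A minor further point: the cutoff must be a basic function, which is why the paper builds it from the distance between leaves rather than from ordinary geodesic balls.
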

 Lastly, we study the transversal stress energy tensors, which is appeared  in the variation formula of the transversal metric. It is well known \cite{JU3} that  a transversally harmonic map satisfies the transverse conservation law, but a transversally $f$-harmonic map does not satisfy the transverse $f$-conservation law (Remark 5.5).  So we define  the transversal $F$-stress energy tensor and show that a transversally $f$-harmonic map satisfies the transverse $F$-consevation law (cf. Proposition 5.11) for special function $f$ related to $F$, where $F:[0,\infty)\to [0,\infty)$ is  a $C^2$-function.

\section{Preliminaries}

Let $(M,g,\mathcal F)$ be a foliated Riemannian
manifold of dimension $n$ with a foliation $\mathcal F$ of codimension $q (=n-p)$ and a bundle-like metric $g$ with respect to $\mathcal F$ \cite{Molino,Tond}.  Let $Q=TM/T\mathcal F$ be the normal  bundle of $\mathcal F$, where  $T\mathcal F$ is the tangent bundle of $\mathcal F$. Let $g_Q$ be the induced metric by $g$ on $Q$, that is, $g_Q = \sigma^*(g|_{T\mathcal F^\perp})$, where $\sigma:Q\to T\mathcal F^\perp$ is the canonical bundle isomorphism. So we consider $Q\cong T\mathcal F^\perp$.  Then $g_Q$ is the holonomy invariant metric on $Q$, meaning that  $L_Xg_Q=0$ for $X\in T\mathcal F$, where
$L_X$ is the transverse Lie derivative with respect to $X$.  Let $\nabla^Q$  be the transverse Levi-Civita
connection on the normal bundle $Q$ \cite{Tond,Tond1} and $R^Q$  be the transversal curvature tensor  of $\nabla^Q\equiv\nabla$, which is  defined by $R^Q(X,Y)=[\nabla_X,\nabla_Y]-\nabla_{[X,Y]}$ for any $X,Y\in\Gamma TM$. Let $K^Q$ and ${\rm Ric}^Q $ be the transversal
sectional curvature and transversal Ricci operator with respect to $\nabla$, respectively.

Let $\Omega_B^r(\mathcal F)$ be the space of all {\it basic
$r$-forms}, i.e.,  $\omega\in\Omega_B^r(\mathcal F)$ if and only if
$i(X)\omega=0$ and $L_X\omega=0$ for any $X\in\Gamma T\mathcal F$, where $i(X)$ is the interior product. Then $\Omega^*(M)=\Omega_B^*(\mathcal F)\oplus \Omega_B^*(\mathcal F)^\perp$ \cite{Lop}.   It is well known that $\kappa_B$ is closed, i.e., $d\kappa_B=0$ \cite{Lop, PJ}, where  $\kappa_B$ is the basic part of the mean curvature form $\kappa$.  
Let $\bar *:\Omega_B^r(\mathcal F)\to \Omega_B^{q-r}(\mathcal F)$ be the basic Hodge star operator  given by
\begin{align*}
\bar *\omega = (-1)^{(n-q)(q-r)} *(\omega\wedge\chi_{\mathcal F}),\quad \omega\in\Omega_B^r(\mathcal F),
\end{align*}
where $\chi_{\mathcal F}$ is the characteristic form of $\mathcal F$ and $*$ is the Hodge star operator associated to $g$.  Let $\langle\cdot,\cdot\rangle$ be the pointwise inner product on $\Omega_B^r(\mathcal F)$, which is given by
\begin{align*}
\langle\omega_1,\omega_2\rangle \nu = \omega_1\wedge\bar * \omega_2,
\end{align*}
where $\nu$ is the transversal volume form such that $*\nu =\chi_{\mathcal F}$. 
 Let $\delta_B :\Omega_B^r (\mathcal F)\to \Omega_B^{r-1}(\mathcal F)$ be the operator defined by
\begin{align*}
\delta_B\omega = (-1)^{q(r+1)+1} \bar * (d_B-\kappa_B \wedge) \bar *\omega,
\end{align*}
where $d_B = d|_{\Omega_B^*(\mathcal F)}$.  Locally,   $\delta_{B}$ is  expressed by \cite{JR} 
\begin{equation}\label{2-2}
\delta_{B} = -\sum_a i(E_a) \nabla_{E_a} + i (\kappa_{B}^\sharp),
\end{equation}
 where $(\cdot)^\sharp$ is the dual vector field of $(\cdot)$ and $\{E_a\}_{a=1,\cdots,q}$ is a local orthonormal basic frame on $Q$.  
It is well known  \cite{PR} that $\delta_B$ is the formal adjoint of $d_B$ with respect to the global inner product $\ll\cdot,\cdot\gg$, which is defined by
\begin{align}\label{2-1}
\ll \omega_1,\omega_2\gg =\int_M \langle\omega_1,\omega_2\rangle\mu_M
\end{align}
for any compactly supported basic forms $\omega_1$ or $\omega_2$,
where $\mu_M =\nu\wedge\chi_{\mathcal F}$ is the volume form.   There exists a bundle-like metric  such that  the mean curvature form satisfies $\delta_B\kappa_B=0$ on compact manifolds \cite{DO,MMR,MA}.
 The  basic
Laplacian $\Delta_B$ acting on $\Omega_B^*(\mathcal F)$ is given by
\begin{equation*}
\Delta_B=d_B\delta_B+\delta_B d_B.
\end{equation*}
Now we define the bundle map $A_Y:\Gamma Q\to \Gamma Q$ for any $Y\in TM$ by
\begin{align}\label{A-operator}
A_Y s =L_Ys-\nabla_Ys,
\end{align}
where $L_Y s = \pi [Y,Y_s]$ for $\pi(Y_s)=s$ and $\pi:TM\to Q$ is the natural projection. 
It is well-known \cite{Kamber2} that for any  foliated vector field $Y$ ( i.e., $[Y,Z]\in \Gamma T\mathcal F$ for
all $Z\in \Gamma T\mathcal F$)
\begin{align*}
A_Y s = -\nabla_{Y_s}\pi(Y).
\end{align*}
So $A_Y$ depends only on $\pi(Y)$ and is a linear operator.  Moreover, $A_Y$ extends in an obvious way to tensors of any type on $Q$  \cite{Kamber2}.
Then we
have the generalized Weitzenb\"ock formula on $\Omega_B^*(\mathcal F)$ \cite{JU2}: for any $\omega\in\Omega_B^r(\mathcal  F),$
\begin{align}\label{2-3}
  \Delta_B \omega = \nabla_{\rm tr}^*\nabla_{\rm tr}\omega +
  F(\omega)+A_{\kappa_B^\sharp}\omega,
\end{align}
where $F(\omega)=\sum_{a,b}\theta^a \wedge i(E_b)R^Q(E_b,
 E_a)\omega$ and 
 \begin{align}\label{2-4}
\nabla_{\rm tr}^*\nabla_{\rm tr}\omega =-\sum_a \nabla^2_{E_a,E_a}\omega
+\nabla_{\kappa_B^\sharp}\omega.
\end{align}
Here $\theta^a$ is the dual form of $E_a$.
 The operator $\nabla_{\rm tr}^*\nabla_{\rm tr}$
is positive definite and formally self adjoint on the space of
basic forms \cite{JU2}. 
  If $\omega$ is a basic 1-form, then $F(\omega)^\sharp
 ={\rm Ric}^Q(\omega^\sharp)$.

Now, let $(M,g,\mathcal F, e^{-f}\nu)$ be a {\it weighted Riemannian foliation}, that is, Riemannian foliation endowed with a weighted transversal volume form $e^{-f}\nu$, where $f$ is  a basic function.  The formal adjoint operator $\delta_{B,f}$ of $d$ with respect to  volume form $e^{-f}\mu_M=\chi_\mathcal F \wedge e^{-f}\nu$ is given by
\begin{align}\label{2-6}
\delta_{B,f}\omega =e^f \delta_B (e^{-f}\omega) =\delta_B\omega + i(\nabla_{\rm tr} f)\omega
\end{align}
for any basic form $\omega$, where $\nabla_{\rm tr} f = \sum_a E_a(f)E_a$.  That is, for any basic forms $\omega\in\Omega_B^r(\mathcal F)$ and $\eta\in\Omega_B^{r+1}(\mathcal F)$,
\begin{align}
\int_M \langle d_B\omega,\eta\rangle e^{-f}\mu_M =\int_M \langle \omega,\delta_{B,f}\eta\rangle e^{-f}\mu_M.
\end{align}
The {\it weighted basic Laplacian}   $\Delta_{B,f}$ is defined by
\begin{align}\label{2-7}
\Delta_{B,f} = d_B\delta_{B,f} + \delta_{B,f}d_B.
\end{align}
From (\ref{2-6}),  we have
\begin{align}\label{2-8}
\Delta_{B,f} = \Delta_B + L_{\nabla_{\rm tr} f}.
\end{align}
Specially,   $\Delta_{B,f} =\Delta_B + i(\nabla_{\rm tr} f)d_B$ on $\Omega_B^0(\mathcal F)$.  Then we have the following.

\begin{lem}  Let $(M,g,\mathcal F)$ be a closed, connected Riemannian manifold with a foliation $\mathcal F$. If $(\Delta_{B,f} -\kappa_B)h\geq 0$ (or $\leq 0$) for any basic function $h$, then $h$ is constant.
\end{lem}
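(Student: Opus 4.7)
My plan is to recognise that, restricted to basic functions, $\Delta_{B,f}-\kappa_B$ is a second-order transversally elliptic differential operator with no zeroth-order term, and then invoke the classical strong minimum (resp.\ maximum) principle.

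First I would expand the operator on a basic function $h$. Using $\Delta_{B,f}=\Delta_B+L_{\nabla_{\rm tr} f}$ from (\ref{2-8}) and the local formula (\ref{2-2}) for $\delta_B$ applied to $\omega = d_B h$, one finds
$$\Delta_B h = -\sum_a E_a(E_a h) + \sum_a (\nabla_{E_a}E_a)(h) + \kappa_B^\sharp(h).$$
Interpreting $\kappa_B h := \kappa_B^\sharp(h)$, the mean-curvature drift then cancels exactly:
$$(\Delta_{B,f}-\kappa_B)h = -\sum_a E_a(E_a h) + \sum_a (\nabla_{E_a}E_a)(h) + (\nabla_{\rm tr} f)(h).$$
Since a basic function depends only on the transverse coordinates in a foliated chart, and $\{E_a\}$ is a local $g_Q$-orthonormal basic frame, on any transverse slice this expression is a genuine linear second-order elliptic PDE $Lu = -a^{ab}(y)\partial_a\partial_b u + b^a(y)\partial_a u$ with smooth coefficients, with $(a^{ab})$ positive-definite, and with \emph{no} zeroth-order term.

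Next, assuming $(\Delta_{B,f}-\kappa_B)h \geq 0$, I would argue as follows. Since $M$ is compact, $h$ attains its minimum value $m$ on the nonempty closed set $S=\{x\in M : h(x)=m\}$. Picking $x_0 \in S$ and a foliated chart around $x_0$, the function $h$ restricted to the transverse slice attains its interior minimum while satisfying $Lu \geq 0$. The Hopf strong minimum principle then forces $h \equiv m$ on the connected transverse slice through $x_0$, and since $h$ is basic, on the whole chart. Hence $S$ is also open; by connectedness of $M$, $S = M$, so $h$ is constant. The reverse inequality $(\Delta_{B,f}-\kappa_B)h \leq 0$ is handled identically by the strong maximum principle at the interior maximum of $h$.

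The one point I would flag is the cancellation producing the displayed expression for $(\Delta_{B,f}-\kappa_B)h$: without subtracting $\kappa_B$, the operator $\Delta_{B,f}$ carries a $\kappa_B^\sharp(h)$ drift that survives at critical points of $h$ and blocks a direct Hopf-type argument. The precise form of the operator in the lemma is calibrated exactly so that this drift vanishes, after which the proof reduces to a textbook application of the strong extremum principle; I do not anticipate any real obstacle beyond verifying this cancellation.
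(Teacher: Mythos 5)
Your proof is correct, but it takes a genuinely different route from the paper's. The paper invokes the Park--Richardson identity $\Delta_B-\kappa_B=\Delta-\kappa$ on basic functions, so that $(\Delta_{B,f}-\kappa_B)h=(\Delta-\kappa+i(\nabla f)d)h$ is realized as a \emph{genuinely} elliptic second-order operator with no zeroth-order term on the closed manifold $M$ itself; the conclusion is then an immediate application of the classical strong maximum principle on $M$, with no need for foliated charts, transverse slices, or an open-closed argument. You instead stay transverse: you expand the operator in a local orthonormal basic frame, observe that it is transversally elliptic with no zeroth-order term, apply the Hopf strong extremum principle on the local quotient through a global extremum point, and propagate constancy by connectedness. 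Both arguments are sound; the paper's is a one-liner once the cited identity is granted, while yours is self-contained and makes the mechanism explicit (it also implicitly uses that $g_Q$ and the coefficients are holonomy-invariant, so the operator really descends to the local quotient --- worth saying). One small correction: your closing claim that the $\kappa_B^\sharp(h)$ drift in $\Delta_{B,f}$ would ``block a direct Hopf-type argument'' is not accurate. The Hopf strong minimum/maximum principle tolerates arbitrary bounded first-order terms and is obstructed only by zeroth-order terms, so the cancellation you verify, while correct, is not what makes the argument work; the same conclusion would follow from $\Delta_{B,f}h\geq 0$ alone. The subtraction of $\kappa_B$ in the lemma is dictated by the Weitzenb\"ock-type formulas used later in the paper, not by the maximum principle.
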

\begin{proof} The proof is similar to \cite[Lemma 2.1]{JLR}.  That is, let $f$ be a basic function on $M$.  
Since  $\Delta_B-\kappa_B =\Delta -\kappa$ on $\Omega_B^0(\mathcal F)$ \cite[Proposition 4.1]{PR},  we have that  on $\Omega_B^0(\mathcal F)$
\begin{align*}
\Delta_{B,f} -\kappa_B =\Delta_B -\kappa_B + i(\nabla_{\rm tr} f) d_B = \Delta -\kappa + i(\nabla f)d,
\end{align*}
 where $\Delta$ is the Laplace operator on $M$.   The operator of right hand side in the above is a second order elliptic  operator, by the maximum principle, the proof follows.
\end{proof}


\section{Harmonic maps}
\subsection {General facts}
Let $(M,  g,\mathcal F)$  and $(M', g',\mathcal F')$ be two foliated Riemannian manifolds and let $\phi:(M,g,\mathcal F)\to (M', g',\mathcal F')$ be a smooth foliated map,
i.e., $d\phi(T\mathcal F)\subset T\mathcal F'$. We define $d_T\phi:Q \to Q'$ by
\begin{align}
d_T\phi := \pi' \circ d \phi \circ \sigma,
\end{align}
where $\pi':TM'\to Q'$ is the natural projection.
Then $d_T\phi$ is a section in $ Q^*\otimes
\phi^{-1}Q'$, where $\phi^{-1}Q'$ is the pull-back bundle on $M$. Let $\nabla^\phi$
and $\tilde \nabla$ be the connections on $\phi^{-1}Q'$ and
$Q^*\otimes \phi^{-1}Q'$, respectively. 
The map  $\phi:(M, g,\mathcal F)\to (M', g',\mathcal F')$ is called {\it transversally totally geodesic} if it satisfies
\begin{align}
\tilde\nabla_{\rm tr}d_T\phi=0,
\end{align}
where $(\tilde\nabla_{\rm tr}d_T\phi)(X,Y):=(\tilde\nabla_{\sigma(X)} d_T\phi)(Y)$ for any $X,Y\in \Gamma Q$. Note that if $\phi:(M,g,\mathcal F)\to (M',g',\mathcal F')$ is transversally totally geodesic with $d\phi(Q)\subset Q'$, then, for any transversal geodesic $\gamma$ on $M$, $\phi\circ\gamma$ is also transversal geodesic.
From now on, we use $\nabla$ instead of all induced connections if we have no confusion.

The {\it transversal tension field} $\tau_{b}(\phi)$ of $\phi$ is defined by
\begin{align}\label{eq3-3}
\tau_{b}(\phi):={\rm tr}_{Q}(\nabla_{\rm tr} d_T\phi)=\sum_a (\nabla_{E_a}d_T\phi)(E_a).
\end{align}
 Let $\Omega$ be a compact domain of $M$. The {\it transversal energy}  of $\phi$ on $\Omega\subset
M$ is defined by
\begin{align}\label{eq2-4}
E_{B}(\phi;\Omega)={1\over 2}\int_{\Omega} | d_T \phi|^2\mu_{M}.
\end{align}
 The map $\phi$ is said to be {\it $(\mathcal F,\mathcal F')$-harmonic} \cite{DT} if $\phi$ is a critical point of the transversal energy functional $E_{B}(\phi)$.

Let $V\in\phi^{-1}Q'$ and $\phi_t$ be a foliated variation with $\phi_0=\phi$ and ${d\phi_t\over dt}|_{t=0}=V$.  Then we have the first variational formula \cite{JJ1} 
\begin{align}\label{3-5}
{d\over dt}E_{B}(\phi_t;\Omega)|_{t=0}=-\int_{\Omega} \langle V,\tau_{b}(\phi)-d_T\phi(\kappa_B^\sharp)\rangle \mu_{M},
\end{align}
where $\langle\cdot,\cdot\rangle$ is the pull-back metric on $\phi^{-1}Q'$.
Trivially,  $\phi$ is $(\mathcal F,\mathcal F')$-harmonic if and only if 
\begin{align*}
{\tau}_{b}(\phi)-d_T\phi(\kappa_B^\sharp)=0. 
\end{align*}
Let $\Omega_B^r(E)$ be the space of $E$-valued basic $r$-forms on $M$, where $E=\phi^{-1}Q'$.   We define $d_\nabla : \Omega_B^r(E)\to \Omega_B^{r+1}(E)$ by
\begin{align}
d_\nabla(\omega\otimes s)=d_B\omega\otimes s+(-1)^r\omega\wedge\nabla s
\end{align}
for any $s\in \Gamma E$ and $\omega\in\Omega_B^r(\mathcal F)$.
Let $\delta_\nabla$ be a formal adjoint of $d_\nabla$ with respect to the inner product induced from (\ref{2-1}).  
Trivially, we have the following remark.

\begin{rem}  Let $\phi:(M,\mathcal F)\to (M',\mathcal F')$ be a smooth foliated  map. Then
\begin{align*}
d_\nabla (d_T\phi)=0,\quad\delta_\nabla d_T\phi=-\tau_b (\phi) +d_T\phi(\kappa_B^\sharp).
\end{align*}
\end{rem}
Now we define the Laplacian $\Delta$ on $\Omega_B^*(E)$  by
\begin{align}\label{ee8}
\Delta =d_\nabla \delta_\nabla +\delta_\nabla d_\nabla.
\end{align}
Then the generalized Weitzenb\"ock type formula (\ref{2-3}) is extended to $\Omega_B^*(E)$ as follows \cite{JJ1}: for any $\Psi\in\Omega_B^r(E)$,
\begin{align}\label{eq4-6}
\Delta \Psi = \nabla_{\rm tr}^*\nabla_{\rm tr} \Psi
 + A_{\kappa_{B}^\sharp} \Psi + F(\Psi), 
\end{align}
where $ \nabla_{\rm tr}^*\nabla_{\rm tr}$, $A_X$ and $F(\Psi)$ are  naturally extended to $\Omega_B^r(E)$.
Moreover,  for any $ \Psi\in\Omega_B^r(E)$,
\begin{align}\label{weitzenbock}
\frac12\Delta_B|\Psi |^{2}
=\langle\Delta \Psi, \Psi\rangle -|\nabla_{\rm tr} \Psi|^2-\langle A_{\kappa_{B}^\sharp}\Psi, \Psi\rangle -\langle F(\Psi),\Psi\rangle.
\end{align}
Then we have the generalized Weitzenb\"ock type formula as follows.
\begin{prop}\label{th2} \cite{JJ1}
Let $\phi:(M, g,\mathcal F) \to (M', g', \mathcal F')$ be a smooth foliated map. Then 
\begin{align}\label{3-11}
\frac12\Delta_B| d_T \phi |^{2}
=  - |\nabla_{\rm tr} d_T \phi|^2  -\langle F(d_T\phi),d_T\phi\rangle-\langle d_\nabla \tau_b(\phi),d_T\phi\rangle +\frac12{\kappa_B^\sharp}(|d_T\phi|^2),
\end{align}
where
\begin{align}\label{3-12}
\langle F(d_T\phi),d_T\phi\rangle&=\sum_a g_{Q'}(d_T \phi({\rm Ric^{Q}}(E_a)),d_T \phi(E_a)) \notag\\
&-\sum_{a,b}g_{Q'}( R^{Q'}(d_T \phi(E_b), d_T \phi(E_a))d_T \phi(E_a), d_T \phi(E_b)).
\end{align}
\end{prop}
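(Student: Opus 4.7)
The plan is to invoke the general Weitzenb\"ock formula (\ref{weitzenbock}) with $\Psi = d_T\phi \in \Omega_B^1(E)$. By Remark 3.1 we have $d_\nabla d_T\phi = 0$, so
$\Delta d_T\phi = d_\nabla \delta_\nabla d_T\phi = -d_\nabla \tau_b(\phi) + d_\nabla(d_T\phi(\kappa_B^\sharp))$.
Taking the pointwise inner product with $d_T\phi$ immediately produces the term $-\langle d_\nabla \tau_b(\phi), d_T\phi\rangle$ appearing in (\ref{3-11}). What then remains is to show that the extra contribution $\langle d_\nabla(d_T\phi(\kappa_B^\sharp)), d_T\phi\rangle$ combines with the $-\langle A_{\kappa_B^\sharp}d_T\phi, d_T\phi\rangle$ term from (\ref{weitzenbock}) to produce the clean boundary-type term $\tfrac12\kappa_B^\sharp(|d_T\phi|^2)$.

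For this combination I would work in a local orthonormal basic frame $\{E_a\}$ of $Q$. Writing $Y=\kappa_B^\sharp$ and using the defining identity $A_Y = L_Y - \nabla_Y$ extended as a derivation to $Q^*\otimes E$, one expands $d_\nabla(d_T\phi(Y))$ via the Leibniz rule; the $\nabla_Y d_T\phi$ piece, paired with $d_T\phi$ under the metric preserved by $\nabla$, supplies $\tfrac12\kappa_B^\sharp(|d_T\phi|^2)$ through the standard identity $\kappa_B^\sharp(|d_T\phi|^2) = 2\langle \nabla_{\kappa_B^\sharp} d_T\phi, d_T\phi\rangle$, while the remaining $A_Y$ contribution cancels precisely the $-\langle A_{\kappa_B^\sharp}d_T\phi, d_T\phi\rangle$ coming from (\ref{weitzenbock}).

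Next, formula (\ref{3-12}) is obtained by unpacking $F(\Psi) = \sum_{a,b}\theta^a\wedge i(E_b)R(E_b, E_a)\Psi$ for $\Psi = d_T\phi$, where $R$ here denotes the curvature of the induced connection on $Q^*\otimes\phi^{-1}Q'$. This curvature splits additively: on the $Q^*$ factor it reduces to the transversal curvature of $(M,\mathcal F)$, and contracting with the frame yields the Ricci term $\sum_a g_{Q'}(d_T\phi({\rm Ric}^Q(E_a)), d_T\phi(E_a))$ via the symmetries of $R^Q$; on the $\phi^{-1}Q'$ factor it pulls back the transversal curvature $R^{Q'}$ of $(M',\mathcal F')$ through $\phi$, and the corresponding contraction supplies the $R^{Q'}$ term in (\ref{3-12}) with the minus sign dictated by the order of arguments.

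I expect the main obstacle to be the careful handling of the $\kappa_B^\sharp$ terms in the first step: one must be meticulous about how $A_Y$ extends as a derivation of the dual bundle $Q^*$ (signs included) and how $d_\nabla$ interacts with the interior product by $\kappa_B^\sharp$ on an $E$-valued $1$-form. Once those sign conventions are pinned down, both (\ref{3-11}) and (\ref{3-12}) reduce to routine bookkeeping built on (\ref{weitzenbock}), Remark 3.1, and the standard splitting of curvature on a tensor product bundle, as in \cite{JJ1}.
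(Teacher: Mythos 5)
Your proposal is correct and follows exactly the paper's route: substitute $\Psi=d_T\phi$ into (\ref{weitzenbock}), use Remark 3.1 to rewrite $\Delta d_T\phi=d_\nabla(-\tau_b(\phi)+d_T\phi(\kappa_B^\sharp))$, and combine the $\kappa_B^\sharp$-terms (the paper declares this ``trivial,'' carrying out the analogous bookkeeping explicitly only in the weighted identity (\ref{3-15})). The only detail left implicit in your sketch is the symmetry $(\nabla_Xd_T\phi)(Y)=(\nabla_Yd_T\phi)(X)$ needed to convert $\langle d_\nabla(d_T\phi(\kappa_B^\sharp)),d_T\phi\rangle$ into $\tfrac12\kappa_B^\sharp(|d_T\phi|^2)$ plus the $A_{\kappa_B^\sharp}$-cancellation, which is harmless.
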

\begin{proof}  By $\Psi=d_T\phi$ in (\ref{weitzenbock}), it is trivial from Remark 3.1.
\end{proof}
Now,  we define  the {\it transversal Bakry-\'Emery-Ricci curvature } ${\rm Ric}_f^Q$ by 
\begin{align}\label{2-9}
{\rm Ric}_f^Q = {\rm Ric}^Q + {\rm Hess}_T f,
\end{align}
where ${\rm Hess}_T f :=\nabla_{\rm tr}d_B f$ is the transversal Hessian, that is, ${\rm Hess}_Tf(X,Y):=(\nabla _{\sigma(X)} d_B f)(\sigma(Y))$ for any  vectors $X, Y\in \Gamma Q$.  
\begin{thm}  Let $\phi:(M,g,\mathcal F,e^{-f}\nu) \to (M',g',\mathcal F')$ be a smooth foliated map. Then 
\begin{align*}
\frac12\Delta_{B,f}|d_T\phi|^2 =-|\nabla_{\rm tr}d_T\phi|^2  - \langle F_f(d_T\phi),d_T\phi\rangle  -\langle d_\nabla(\bar\tau_{b,f}(\phi)),d_T\phi\rangle+ \frac12 \kappa_B^\sharp (|d_T\phi|^2),
\end{align*}
where   $\bar\tau_{b,f}(\phi) =\tau_b(\phi) -d_T\phi(\nabla_{\rm tr}f)$ and
\begin{align}\label{3-13}
\langle F_f(d_T\phi),d_T\phi\rangle&=\sum_a g_{Q'}(d_T \phi({\rm Ric}_f^{Q}(E_a)),d_T \phi(E_a)) \notag\\
&-\sum_{a,b}g_{Q'}( R^{Q'}(d_T \phi(E_b), d_T \phi(E_a))d_T \phi(E_a), d_T \phi(E_b)).
\end{align}
\end{thm}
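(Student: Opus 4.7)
The plan is to reduce the claim to the unweighted Weitzenb\"ock identity of Proposition~\ref{th2} and to compute the extra contribution from the zeroth-order part of $\Delta_{B,f}$. Since $|d_T\phi|^2$ is a basic function, (\ref{2-8}) specializes to
\[\tfrac12\Delta_{B,f}|d_T\phi|^2 \;=\; \tfrac12\Delta_B|d_T\phi|^2 \;+\; \tfrac12\nabla_{\rm tr}f\bigl(|d_T\phi|^2\bigr),\]
and Proposition~\ref{th2} supplies the first term. Hence the whole task reduces to rewriting $\tfrac12\nabla_{\rm tr}f(|d_T\phi|^2)$ in a form that assembles cleanly with the pieces of Proposition~\ref{th2} to produce $F_f$ and $\bar\tau_{b,f}$.

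For the key computation, I would fix $x\in M$, choose a local orthonormal basic frame $\{E_a\}$ on $Q$ with $(\nabla E_a)(x)=0$, and set $V=\nabla_{\rm tr}f$. From $|d_T\phi|^2=\sum_a g_{Q'}(d_T\phi(E_a),d_T\phi(E_a))$, differentiating in the direction $V$ at $x$ (the $d_T\phi(\nabla_V E_a)$ pieces vanishing) gives
\[\tfrac12 V\bigl(|d_T\phi|^2\bigr) \;=\; \sum_a g_{Q'}\bigl((\nabla_V d_T\phi)(E_a),\, d_T\phi(E_a)\bigr).\]
The decisive step is then the symmetry $(\nabla_V d_T\phi)(E_a)=(\nabla_{E_a} d_T\phi)(V)$, which is precisely the content of $d_\nabla d_T\phi=0$ (Remark~3.1). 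Expanding $(\nabla_{E_a} d_T\phi)(V)=\nabla_{E_a}(d_T\phi(V))-d_T\phi(\nabla_{E_a}V)$ and using $\nabla_{E_a}V=\sum_b{\rm Hess}_T f(E_a,E_b)E_b={\rm Hess}_T f(E_a)$ at $x$, the above reduces to
\[\tfrac12\nabla_{\rm tr}f\bigl(|d_T\phi|^2\bigr) \;=\; \bigl\langle d_\nabla\bigl(d_T\phi(\nabla_{\rm tr}f)\bigr),\,d_T\phi\bigr\rangle \;-\; \sum_a g_{Q'}\bigl(d_T\phi({\rm Hess}_T f(E_a)),\,d_T\phi(E_a)\bigr).\]

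To finish, I would assemble the identity. Linearity of $d_\nabla$ combines $-\langle d_\nabla\tau_b(\phi),d_T\phi\rangle$ from Proposition~\ref{th2} with $+\langle d_\nabla(d_T\phi(\nabla_{\rm tr}f)),d_T\phi\rangle$ into $-\langle d_\nabla\bar\tau_{b,f}(\phi),d_T\phi\rangle$, by definition of $\bar\tau_{b,f}$. Comparing (\ref{3-12}) with (\ref{3-13}) and using (\ref{2-9}), the term $-\langle F(d_T\phi),d_T\phi\rangle$ together with the extra Hessian sum merges into $-\langle F_f(d_T\phi),d_T\phi\rangle$. The only step that is not pure bookkeeping is the symmetry of $\nabla d_T\phi$ used above; that is the main potential obstacle, together with care that the sign of the Hessian contribution lands on the $F_f$ side rather than on the $\bar\tau_{b,f}$ side.
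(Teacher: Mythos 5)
Your proposal is correct and follows essentially the same route as the paper: both reduce to Proposition~\ref{th2} via $\Delta_{B,f}=\Delta_B+i(\nabla_{\rm tr}f)d_B$ on basic functions, use the symmetry of $\nabla_{\rm tr}d_T\phi$ to rewrite $\tfrac12\langle\nabla_{\rm tr}|d_T\phi|^2,\nabla_{\rm tr}f\rangle$ as $\langle d_\nabla(d_T\phi(\nabla_{\rm tr}f)),d_T\phi\rangle-\langle d_T\phi({\rm Hess}_Tf),d_T\phi\rangle$, and then absorb the Hessian term into $F_f$ via ${\rm Ric}_f^Q={\rm Ric}^Q+{\rm Hess}_Tf$ and the $d_\nabla$ term into $\bar\tau_{b,f}$. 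The sign bookkeeping you flag does come out as you expect, exactly as in the paper's equations (\ref{3-14})--(\ref{3-15}).
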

\begin{proof}
From (\ref{2-9}), we know 
\begin{align*}
\langle F(d_T\phi),d_T\phi\rangle = \langle F_f (d_T\phi),d_T\phi\rangle - \langle d_T\phi({\rm Hess}_Tf),d_T \phi\rangle,
\end{align*}
where  $\langle d_T\phi({\rm Hess}_Tf),d_T \phi\rangle := \sum_a g_{Q'} (d_T\phi(\nabla_{E_a} \nabla_{\rm tr} f),d_T\phi(E_a))$. Hence from (\ref{2-8}) and  (\ref{3-11}),
\begin{align}\label{3-14}
\frac12\Delta_{B,f}|d_T\phi|^2=&-|\nabla_{\rm tr}d_T\phi|^2 - \langle F_f(d_T\phi),d_T\phi\rangle+\langle\nabla_{\kappa_B^\sharp} d_T\phi,d_T\phi\rangle -\langle d_\nabla(\tau_b(\phi)),d_T\phi\rangle  \notag\\
&+\langle d_T\phi({\rm Hess}_Tf),d_T\phi\rangle+\frac12 \langle \nabla_{\rm tr}|d_T\phi|^2,\nabla_{\rm tr} f\rangle.
\end{align}
Note that $(\nabla_{\rm tr} d_T\phi)(X,Y) =(\nabla_{\rm tr} d_T\phi)(Y, X)$ for any  vector fields $X,Y \in \Gamma Q$ . Hence if we choose a local orthonormal basic frame $\{E_a\}$ such that $\nabla E_a=0$ at $x\in M$, then
\begin{align*}
\frac12 \langle \nabla_{\rm tr}|d_T\phi|^2,\nabla_{\rm tr} f\rangle &=\sum_a \langle (\nabla_{\nabla_{tr} f} d_T\phi)(E_a),d_T\phi(E_a)\rangle\\
&=\sum_a\langle (\nabla_{E_a} d_T\phi)(\nabla_{\rm tr}f),d_T\phi(E_a)\rangle\\
&=\sum_a\langle \nabla_{E_a} d_T\phi(\nabla_{\rm tr}f),d_T\phi(E_a)\rangle -\sum_a\langle d_T\phi(\nabla_{E_a}\nabla_{\rm tr}f),d_T\phi(E_a)\rangle\\
&=\sum_a\langle \theta^a\wedge\nabla_{E_a} d_T\phi(\nabla_{\rm tr}f),d_T\phi\rangle -\sum_a\langle d_T\phi(\nabla_{E_a}\nabla_{\rm tr}f),d_T\phi(E_a)\rangle\\
&=\langle d_\nabla(d_T\phi(\nabla_{\rm tr}f)),d_T\phi\rangle -\langle d_T\phi({\rm Hess}_Tf),d_T\phi\rangle.
\end{align*}
That is,
\begin{align}\label{3-15}
\langle d_T\phi({\rm Hess}_Tf),d_T\phi\rangle+\frac12 \langle \nabla_{\rm tr}|d_T\phi|^2,\nabla_{\rm tr} f\rangle=\langle d_\nabla(d_T\phi(\nabla_{\rm tr}f)),d_T\phi\rangle.
\end{align}
From  (\ref{3-14}) and (\ref{3-15}), the proof follows.
\end{proof}
\subsection{$f$-harmonic maps}

A smooth foliated map $\phi:(M,g,\mathcal F,e^{-f}\nu)\to (M',g',\mathcal F')$ is said to be {\it transversally $f$-harmonic} if  $\phi$ is a solution of  the Eular-Largrange  equation $\tau_{b,f}(\phi)=0$, where  $\tau_{b,f}:={\rm tr}_Q (\nabla_{\rm tr}( e^{-f} d_T\phi))$ is the transversal $f$-tension field, that is,
\begin{align}
\tau_{b,f}(\phi)  =( \tau_b(\phi) - d_T\phi(\nabla_{\rm tr}f))e^{-f}. 
\end{align}
The map $\phi$ is said to be {\it $(\mathcal F,\mathcal F')_f$-harmonic map} if $\phi$ is a critical point of  the {\it transversal $f$-energy functional} $E_{B,f}(\phi)$ given by
\begin{align}\label{3-16}
E_{B,f}(\phi,\Omega) = \frac12\int_\Omega |d_T\phi|^2 e^{-f}\mu_M.
\end{align}
Remark that if $f$ is constant, then a transversally $f$-harmonic and $(\mathcal F,\mathcal F')_f$-harmonic map are transversally harmonic and $(\mathcal F,\mathcal F')$-harmonic map, respectively.
\begin{thm} $(${\rm The first variational formula}$)$ \label{th4}
Let $\phi:(M, g, \mathcal F)\to (M', g', \mathcal F')$
be a smooth foliated map and $\{\phi_t\}$ be a smooth foliated variation of $\phi$ supported in a compact domain $\Omega$. Then
\begin{align*}
{d\over dt}E_{B,f}(\phi_t;\Omega)|_{t=0}=-\int_{\Omega} \langle V, e^f{\tau}_{b,f}(\phi)-d_T\phi(\kappa_B^\sharp)\rangle e^{-f} \mu_{M},
\end{align*}
where $V$ is the variation vector field of $\phi_t$.
\end{thm}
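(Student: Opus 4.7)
The plan is to differentiate $E_{B,f}(\phi_t;\Omega)$ directly from its definition in (\ref{3-16}), apply the standard commutation identity for the pulled-back connection along a smooth variation, and then integrate by parts using the weighted codifferential $\delta_{B,f}$ from (\ref{2-6}). Since $e^{-f}\mu_M$ is independent of $t$ and $f$ is basic, differentiating under the integral sign yields
\begin{align*}
\frac{d}{dt}E_{B,f}(\phi_t;\Omega)\big|_{t=0}
= \int_\Omega \sum_a \bigl\langle \nabla_{\partial_t} d_T\phi_t(E_a)\big|_{t=0},\, d_T\phi(E_a)\bigr\rangle\, e^{-f}\mu_M.
\end{align*}
Because $[\partial_t, E_a]=0$ and $\nabla$ is torsion-free, the standard identity $\nabla_{\partial_t} d_T\phi_t(E_a)|_{t=0} = \nabla_{E_a}V$ holds, where $V=(d\phi_t/dt)|_{t=0}$ is the variation vector field in $\phi^{-1}Q'$.

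Next, I would choose a local orthonormal basic frame $\{E_a\}$ with $\nabla E_a = 0$ at the point of evaluation and apply the Leibniz rule to rewrite the integrand as
\begin{align*}
\sum_a \langle \nabla_{E_a} V,\, d_T\phi(E_a)\rangle
 = \sum_a E_a\bigl(\langle V,\, d_T\phi(E_a)\rangle\bigr) - \langle V,\, \tau_b(\phi)\rangle.
\end{align*}
Setting $\omega = \sum_a \langle V, d_T\phi(E_a)\rangle\,\theta^a$, formula (\ref{2-2}) identifies the first sum with $-\delta_B \omega + \langle V, d_T\phi(\kappa_B^\sharp)\rangle$. The remaining weighted integral $\int_\Omega \delta_B \omega\, e^{-f}\mu_M$ is then handled via (\ref{2-6}): using $\delta_{B,f}\omega = \delta_B\omega + \omega(\nabla_{\rm tr}f)$ together with the fact that $\delta_{B,f}$ is the formal adjoint of $d_B$ with respect to the weighted inner product (and $d_B 1=0$, with $\omega$ compactly supported in $\Omega$), one obtains
\begin{align*}
\int_\Omega \delta_B \omega\, e^{-f}\mu_M
= -\int_\Omega \langle V,\, d_T\phi(\nabla_{\rm tr} f)\rangle\, e^{-f}\mu_M.
\end{align*}
Assembling the three contributions gives
\begin{align*}
\frac{d}{dt}E_{B,f}(\phi_t;\Omega)\big|_{t=0}
= -\int_\Omega \langle V,\, \tau_b(\phi) - d_T\phi(\nabla_{\rm tr} f) - d_T\phi(\kappa_B^\sharp)\rangle\, e^{-f}\mu_M,
\end{align*}
and the identity $e^f\tau_{b,f}(\phi) = \tau_b(\phi) - d_T\phi(\nabla_{\rm tr}f)$, read off from the definition of $\tau_{b,f}$, immediately turns this into the stated formula.

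The main obstacle is the bookkeeping in the weighted integration by parts: one must verify that the weight $e^{-f}$ contributes precisely the term $d_T\phi(\nabla_{\rm tr}f)$ that combines with $\tau_b(\phi)$ to form $e^f\tau_{b,f}(\phi)$, while the non-minimality contribution $d_T\phi(\kappa_B^\sharp)$ arises separately from the $i(\kappa_B^\sharp)$ piece of $\delta_B$ in (\ref{2-2}) and is independent of the weight. This separation is exactly what distinguishes transversally $f$-harmonic maps (characterized by $\tau_{b,f}(\phi)=0$) from $(\mathcal F,\mathcal F')_f$-harmonic maps, whose Euler--Lagrange equation is $e^f\tau_{b,f}(\phi) = d_T\phi(\kappa_B^\sharp)$.
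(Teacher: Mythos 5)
Your argument is correct, and every step checks out against the paper's own formulas: the commutation $\nabla_{\partial_t}d_T\phi_t(E_a)|_{t=0}=\nabla_{E_a}V$, the identification of $\sum_a E_a(\langle V,d_T\phi(E_a)\rangle)$ with $-\delta_B\omega+\langle V,d_T\phi(\kappa_B^\sharp)\rangle$ via (\ref{2-2}), and the weighted integration by parts via (\ref{2-6}) producing exactly the $\langle V,d_T\phi(\nabla_{\rm tr}f)\rangle$ term that combines with $-\langle V,\tau_b(\phi)\rangle$ to give $-\langle V,e^f\tau_{b,f}(\phi)\rangle$. The paper, however, does not carry out this computation at all: its entire proof is the one-line remark that the statement ``is trivial from \cite[Theorem 3.7]{JU3}'', i.e.\ it treats the weighted formula as an immediate consequence of the unweighted first variation formula (\ref{3-5}) applied with the density $e^{-f}$ absorbed into the integrand. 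Your version is therefore a genuinely self-contained derivation of what the paper only cites, and it has the advantage of making visible precisely where the two correction terms come from --- the $d_T\phi(\nabla_{\rm tr}f)$ term from the weight in the integration by parts, and the $d_T\phi(\kappa_B^\sharp)$ term from the $i(\kappa_B^\sharp)$ summand of $\delta_B$ in (\ref{2-2}) --- which is exactly the distinction between the Euler--Lagrange equations $\tau_{b,f}(\phi)=0$ and $e^f\tau_{b,f}(\phi)-d_T\phi(\kappa_B^\sharp)=0$ that the rest of the paper relies on. The only cosmetic caveat is that you should note $\omega=\sum_a\langle V,d_T\phi(E_a)\rangle\theta^a$ is a basic $1$-form (which holds because the variation is foliated), since (\ref{2-2}) and the adjointness of $\delta_{B,f}$ are only asserted on basic forms.
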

\begin{proof}  It is trivial from  \cite[Theorem 3.7]{JU3}.
\end{proof}
From Theorem 3.4,  the map $\phi:M\to M'$ is  $(\mathcal F,\mathcal F')_f$-harmonic map if and only if  
\begin{align}\label{3-17}
\tilde\tau_{b,f}(\phi) := e^f\tau_{b,f}(\phi) -d_T\phi(\kappa_B^\sharp) =0.
\end{align}
\begin{rem} A $(\mathcal F,\mathcal F')_f$-harmonic map on $(M,g,\mathcal F)$ is a $(\mathcal F,\mathcal F')$-harmonic map on a weighted Riemannian foliation $(M,g,\mathcal F,e^{-f}\nu)$.
\end{rem}
In general,  $(\mathcal F,\mathcal F')_f$-harmonic map and transversally $f$-harmonic map are not equivalent  unless $\mathcal F$ is  minimal. For more information about the transversally $f$-harmonic map, see \cite{CW}.

\begin{lem}

(1)  If $\phi:M\to M'$ is  a transversally $f$-harmonic map (i.e.$\tau_{b,f}(\phi)=0$),  then
\begin{align}\label{3-18}
|d_T\phi|(\Delta_{B,f}-\kappa_B^\sharp)|d_T\phi|
=|d_{B}|d_T\phi||^{2}-|\nabla_{\rm tr}d_T\phi|^{2}-\langle F_f(d_T\phi),d_T\phi\rangle.
\end{align}

(2) If  $\phi:M\to M'$ be a  $(\mathcal F,\mathcal F')_f$-harmonic map (i.e., $\tilde\tau_{b,f}(\phi)=0$), then
\begin{align}\label{3-19}
|d_T\phi| (\Delta_{B,f}-\kappa_B^\sharp)|d_T\phi|
=&|d_{B}|d_T\phi||^{2}-|\nabla_{\rm tr}d_T\phi|^{2}-\langle F_f(d_T\phi),d_T\phi\rangle\notag\\
&-\langle d_\nabla i(\kappa_{B}^\sharp)d_T\phi,d_T \phi\rangle.
\end{align}
\end{lem}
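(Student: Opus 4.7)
\medskip

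The plan is to derive both identities as direct corollaries of the generalized Weitzenb\"ock formula in Theorem 3.4, applied with $u := |d_T\phi|$, by using the second-order product rule for $\Delta_{B,f}$ on functions.

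First I would establish the scalar identity
\begin{align*}
\tfrac12\Delta_{B,f}(u^2) = u\,\Delta_{B,f} u - |d_B u|^2
\end{align*}
for any basic function $u\ge 0$. Since $\Delta_B(u^2) = 2u\Delta_B u - 2|d_B u|^2$ (from $d_B(u^2)=2ud_B u$ and the standard Leibniz rule for $\delta_B$ on $h\eta$) and $\Delta_{B,f} = \Delta_B + i(\nabla_{\rm tr}f)d_B$ on $\Omega_B^0(\mathcal F)$ by (\ref{2-8}), the cross term $\langle \nabla_{\rm tr}f,\nabla_{\rm tr}(u^2)\rangle = 2u\langle\nabla_{\rm tr}f,\nabla_{\rm tr}u\rangle$ redistributes cleanly to give the displayed formula. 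Analogously, $\tfrac12\kappa_B^\sharp(u^2) = u\,\kappa_B^\sharp(u)$.

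Next I would set $u=|d_T\phi|$ and substitute these two identities into Theorem 3.4. Rearranging, the term $|d_B u|^2$ moves to the right-hand side, and the term $u\,\kappa_B^\sharp(u)$ combines with $u\,\Delta_{B,f}u$ on the left to produce $u(\Delta_{B,f}-\kappa_B^\sharp)u$. The resulting general identity, valid for any smooth foliated map, is
\begin{align*}
|d_T\phi|(\Delta_{B,f}-\kappa_B^\sharp)|d_T\phi|
= |d_B|d_T\phi||^2 - |\nabla_{\rm tr}d_T\phi|^2 - \langle F_f(d_T\phi),d_T\phi\rangle - \langle d_\nabla \bar\tau_{b,f}(\phi),d_T\phi\rangle.
\end{align*}

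Finally I would specialize. For (1), since $\tau_{b,f}(\phi) = e^{-f}\bar\tau_{b,f}(\phi)$ and $e^{-f}$ never vanishes, $\tau_{b,f}(\phi)=0$ forces $\bar\tau_{b,f}(\phi)=0$, killing the last term. For (2), the condition $\tilde\tau_{b,f}(\phi)=e^f\tau_{b,f}(\phi)-d_T\phi(\kappa_B^\sharp)=0$ rewrites as $\bar\tau_{b,f}(\phi)=d_T\phi(\kappa_B^\sharp)$, and viewing $d_T\phi\in\Omega_B^1(E)$ with $E=\phi^{-1}Q'$, the evaluation $d_T\phi(\kappa_B^\sharp)$ is precisely $i(\kappa_B^\sharp)d_T\phi$, producing the extra term in (\ref{3-19}). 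There is no real obstacle here beyond bookkeeping; the only mildly delicate point is justifying the pointwise identity for $u=|d_T\phi|$ at points where $u=0$, which is handled by continuity after noting the identity holds on the open set $\{u>0\}$.
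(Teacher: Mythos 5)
Your proposal is correct and follows essentially the same route as the paper: the product identity $\tfrac12\Delta_{B,f}|d_T\phi|^2=|d_T\phi|\Delta_{B,f}|d_T\phi|-|d_B|d_T\phi||^2$ substituted into the weighted Weitzenb\"ock formula (which is Theorem 3.3, not 3.4 --- 3.4 is the first variation formula), then specializing via $\tau_{b,f}(\phi)=e^{-f}\bar\tau_{b,f}(\phi)$ and $\tilde\tau_{b,f}(\phi)=\bar\tau_{b,f}(\phi)-i(\kappa_B^\sharp)d_T\phi$. Your extra remark about handling the zero set of $|d_T\phi|$ by continuity is a detail the paper leaves implicit.
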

\begin{proof}
By a simple calculation, we have
\begin{align*}
\frac12\Delta_{B,f}| d_T \phi |^{2}
=|d_T\phi|\Delta_{B,f}|d_T\phi|-|d_{B}|d_T\phi||^{2}.
\end{align*}
Hence  the proofs follow from Theorem 3.3 and (\ref{3-17}). 
\end{proof}

Then we have the following.

\begin{thm} Let $(M,g,\mathcal F,e^{-f}\nu)$ be a weighted Riemannian foliation on a closed manifold $M$ with ${\rm Ric}_{f}^Q\geq 0$ and $(M',g',\mathcal F')$ be a Riemannian foliation with $K^{Q'}\leq 0$. Then a transversally $f$-harmonic (or $(\mathcal F,\mathcal F')_f$-harmonic)  map $\phi:M\to M'$ is always transversally totally geodesic. 
In addition, 

(1) if ${\rm Ric}_f^Q>0$ at some point, then $\phi$ is transversally constant.

(2) if  $K^{Q'}<0$, then $\phi$ is transversally constant or $\phi(M)$  is a transversally geodesic closed curve.
\end{thm}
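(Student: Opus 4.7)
The plan is to derive from Theorem 3.3 a sub- or super-harmonic inequality for the basic function $|d_T\phi|^2$ and invoke the elliptic maximum principle of Lemma 2.1. As a preliminary, the curvature hypotheses force the pointwise bound $\langle F_f(d_T\phi),d_T\phi\rangle\geq 0$, since in (3.13) the Ricci piece is non-negative when ${\rm Ric}^Q_f\geq 0$ and the sectional-curvature piece $-\sum_{a,b}g_{Q'}(R^{Q'}(X_b,X_a)X_a,X_b)$ with $X_a=d_T\phi(E_a)$ is non-negative when $K^{Q'}\leq 0$.

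For the transversally $f$-harmonic case, $\bar\tau_{b,f}(\phi)=0$ collapses Theorem 3.3 to
\[
\tfrac12\bigl(\Delta_{B,f}-\kappa_B^\sharp\bigr)|d_T\phi|^2 = -|\nabla_{\rm tr}d_T\phi|^2 - \langle F_f(d_T\phi),d_T\phi\rangle \leq 0.
\]
Since $|d_T\phi|^2$ is basic, Lemma 2.1 forces it to be constant, and the two non-negative terms on the right must then vanish identically, giving $\nabla_{\rm tr}d_T\phi\equiv 0$ (transversally totally geodesic) and $\langle F_f(d_T\phi),d_T\phi\rangle\equiv 0$. For the $(\mathcal F,\mathcal F')_f$-harmonic case, (3.17) gives instead $\bar\tau_{b,f}(\phi)=d_T\phi(\kappa_B^\sharp)$, so Theorem 3.3 picks up the sign-indefinite term $-\langle d_\nabla(d_T\phi(\kappa_B^\sharp)),d_T\phi\rangle$; I would integrate the identity against the weighted volume $e^{-f}\mu_M$. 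Self-adjointness of $\Delta_{B,f}$ with respect to $e^{-f}\mu_M$ kills the left-hand side, integration by parts rewrites the extra term as an inner product of $d_T\phi(\kappa_B^\sharp)$ against the weighted co-differential $\delta_\nabla d_T\phi + d_T\phi(\nabla_{\rm tr}f)$, which vanishes by (3.17) combined with Remark 3.1, and a further Stokes argument controls the residual $\int\kappa_B^\sharp(|d_T\phi|^2)\,e^{-f}\mu_M$ via a judicious choice of bundle-like metric (one with $\delta_B\kappa_B=0$ being available on closed manifolds). Both non-negative integrands then vanish and the same pointwise conclusions follow.

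Given transverse total geodesicity and $\langle F_f(d_T\phi),d_T\phi\rangle\equiv 0$, the refinements reduce to pointwise linear algebra. For (1), at a point where ${\rm Ric}^Q_f$ is positive definite, the Ricci piece in (3.13) can vanish only if $d_T\phi=0$ there, and constancy of $|d_T\phi|$ propagates this to all of $M$. For (2), $K^{Q'}<0$ together with vanishing of the sectional-curvature piece forces $d_T\phi(E_a)$ and $d_T\phi(E_b)$ to be linearly dependent for every $a,b$, so $\mathrm{rank}\,d_T\phi\leq 1$; combined with $\nabla_{\rm tr}d_T\phi\equiv 0$ and compactness of $M$, this makes $\phi(M)$ a closed transversally geodesic curve. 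The hard part will be the $(\mathcal F,\mathcal F')_f$-harmonic case: the pointwise Weitzenb\"ock identity acquires an indefinite $\kappa_B$-contribution that cannot be handled by direct application of Lemma 2.1, so it must be controlled by an integrated argument that simultaneously exploits the Euler--Lagrange identity in co-differential form and an adapted bundle-like metric.
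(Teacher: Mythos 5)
Your proposal follows essentially the same route as the paper: the weighted Bochner identity of Theorem 3.3, positivity of $\langle F_f(d_T\phi),d_T\phi\rangle$ under the curvature hypotheses, the maximum principle of Lemma 2.1 in the transversally $f$-harmonic case, and an integrated argument exploiting $\delta_\nabla(e^{-f}d_T\phi)=-\tilde\tau_{b,f}(\phi)=0$ together with a bundle-like metric satisfying $\delta_B\kappa_B=0$ in the $(\mathcal F,\mathcal F')_f$-harmonic case; the endgame (vanishing of $|\nabla_{\rm tr}d_T\phi|^2$ and of $\langle F_f(d_T\phi),d_T\phi\rangle$, then the pointwise linear algebra for (1) and (2)) is identical. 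Your one deviation is favorable but minor: you run the argument on the basic function $|d_T\phi|^2$ directly, whereas the paper passes to $|d_T\phi|$ via Lemma 3.6 and invokes Kato's inequality $|\nabla_{\rm tr}d_T\phi|\geq|d_B|d_T\phi||$; your version needs neither Kato nor the implicit division by $|d_T\phi|$, and in case (b) it reaches the conclusion with a single integration rather than the paper's two passes. One caveat on the step you yourself flag as the hard part: controlling $\int_M\kappa_B^\sharp(|d_T\phi|^2)e^{-f}\mu_M$ is not finished by choosing $\delta_B\kappa_B=0$ alone, because the adjoint relevant to the weighted measure is $\delta_{B,f}$ and $\delta_{B,f}\kappa_B=i(\nabla_{\rm tr}f)\kappa_B$ need not vanish or have a sign; the paper's own treatment of this term (bounding the weighted integral by a constant times the unweighted one) has exactly the same weakness, which is presumably why the hypothesis $g_Q(\kappa_B^\sharp,\nabla f)\leq 0$ is imposed explicitly in Section 4. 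So your sketch reproduces the paper's argument, including its one soft spot.
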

\begin{proof}  
By the first Kato's inequality \cite{BE1}, we have
\begin{align}\label{3-20}
|\nabla_{\rm tr}d_T\phi|\geq|d_{B}|d_T\phi||.
\end{align}
(a)  Let $\phi:M\to M'$ be a  transversally $f$-harmonic map. That is,    $\tau_{b,f}(\phi)=0$.   From  (\ref{3-18}) and (\ref{3-20}), we have
\begin{align}\label{3-21}
|d_T\phi| (\Delta_{B,f}-\kappa_B^\sharp)|d_T\phi|\leq -\langle F_f(d_T\phi),d_T\phi\rangle.
\end{align}
From the assumptions ${\rm Ric}_f^Q\geq 0$ and $K^{Q'}\leq 0$,   $\langle F_f(d_T\phi),d_T\phi\rangle \geq 0$ and so
\begin{align*}
(\Delta_{B,f} -\kappa_B^\sharp)|d_T\phi|\leq 0.
\end{align*}
By Lemma 2.1,  $|d_T\phi|$ is constant.  Again, from  (\ref{3-18}), we have
\begin{align}\label{3-22}
|\nabla_{\rm tr} d_T \phi|^2+\langle F_f(d_T\phi),d_T\phi\rangle=0.
\end{align}
Since  $\langle F_f(d_T\phi),d_T\phi\rangle \geq 0$,
from (\ref{3-22}), we have
\begin{align}\label{3-23}
|\nabla_{\rm tr} d_T \phi|^2=0 \quad\textrm{and}\quad \langle F_f(d_T\phi),d_T\phi\rangle=0.
\end{align}
Thus, $\nabla_{\rm tr}d_T\phi=0$, i.e., $\phi$ is transversally totally geodesic.

Furthermore, from (\ref{3-13}) and (\ref{3-23}), we get
\begin{align}\label{3-24}
\left\{
  \begin{array}{ll}
    g_{Q'}(d_T\phi({\rm Ric}_f^{Q}(E_a)),d_T\phi(E_a))= 0,\\\\
    g_{Q'}(R^{Q'}(d_T\phi(E_a),d_T\phi(E_b))d_T\phi(E_a),d_T\phi(E_b))= 0
  \end{array}
\right.
\end{align}
for any indices $a$ and $b$.
If ${\rm Ric}_f^{Q}$ is positive at some point, then $d_T\phi=0$, i.e., $\phi$ is transversally constant, which proves (1). For the statement (2), if the rank of $d_T\phi \geq2$, then there exists a point $x\in M$ such that at least two linearly independent vectors at $\phi(x)$, say, $d_T\phi(E_1)$ and $d_T\phi(E_2)$.
Since  $K^{Q'}<0$,
\begin{align*}
g_{Q'}(R^{Q'}(d_T\phi(E_1),d_T\phi(E_2))d_T\phi(E_2),d_T\phi(E_1))<0,
\end{align*}
which contradicts (\ref{3-24}). Hence the rank of $d_T\phi <2$, that is, the rank of $d_T\phi$ is zero or one everywhere. If the rank of $d_T\phi$ is zero, then $\phi$ is transversally constant. If the rank of $d_T\phi$ is one, then $\phi(M)$ is  a transversally  geodesic closed curve.

(b) Let $\phi:M\to M'$ be a $(\mathcal F,\mathcal F')_f$-harmonic map.  Then 
\begin{align}\label{3-25-1}
\delta_\nabla (e^{-f}d_T\phi) =-\tilde\tau_{b,f}(\phi)=0.
\end{align}
Hence from (\ref{3-19}) and (\ref{3-20}), we get
\begin{align}\label{3-25}
|d_T\phi| \Delta_{B,f}|d_T\phi|  \leq -\langle F_f(d_T\phi),d_T\phi\rangle -\langle d_\nabla i(\kappa_B^\sharp)d_T\phi,d_T\phi\rangle+|d_T\phi| \kappa_B^\sharp(|d_T\phi|).
\end{align}
By the curvature assumptions,   $\langle F_f(d_T\phi),d_T\phi\rangle \geq 0$. So we get from (\ref{3-25}),
\begin{align}\label{3-26}
|d_T\phi|\Delta_{B,f}|d_T\phi|
\leq-\langle d_\nabla i(\kappa_{B}^\sharp)d_T\phi,d_T \phi\rangle+|d_T\phi|\kappa_{B}^\sharp(|d_T\phi|).
\end{align}
Integrating (\ref{3-26}) with the weighted measure, we have
\begin{align}\label{3-27}
\int_{M}\langle&|d_T\phi|,\Delta_{B,f}|d_T\phi|\rangle e^{-f}\mu_{M}
\leq-\int_{M}\langle d_\nabla i(\kappa_{B}^\sharp)d_T\phi,d_T \phi\rangle e^{-f}\mu_{M}+\int_{M}|d_T\phi|\kappa_{B}^\sharp(|d_T\phi|)e^{-f}\mu_{M}.
\end{align}
From (\ref{3-25-1}), we get
\begin{align}\label{3-28}
\int_{M}\langle d_\nabla i(\kappa_{B}^\sharp)d_T\phi,d_T \phi\rangle e^{-f}\mu_{M}
=\int_{M}\langle i(\kappa_{B}^\sharp)d_T\phi,\delta_\nabla (e^{-f}d_T \phi)\rangle \mu_{M}
=0.
\end{align}
Since $M$ is closed,   we have
\begin{align}\label{3-30-1}
\int_{M}|d_T\phi|\kappa_{B}^\sharp(|d_T\phi|)e^{-f}\mu_{M}
&\leq B\int_M \langle d_B |d_T\phi|^2, \kappa_B\rangle \mu_M,
\end{align}
where $B$ is a positive constant.
If we choose a bundle-like metric $g$ such that $\delta_{B}\kappa_{B}=0$, then  from (\ref{3-30-1}),
\begin{align}\label{3-29}
\int_{M}|d_T\phi|\kappa_{B}^\sharp(|d_T\phi|)e^{-f}\mu_{M}  \leq 0.
\end{align}
Note that  $ \int_{M}\langle|d_T\phi|,\Delta_{B,f}|d_T\phi|\rangle e^{-f}\mu_{M}=\int_M |d_B|d_T\phi||^2 e^{-f}\mu_M\geq 0$. So from (\ref{3-27})$\sim$(\ref{3-29}), we get
\begin{align*}
\int_{M}\langle |d_T\phi|,\Delta_{B,f}|d_T\phi|\rangle e^{-f}\mu_{M}=0,  
\end{align*}
which yields  $ d_{B}|d_T\phi|=0$.  That is, $|d_T\phi|$ is constant.
From (\ref{3-19}), we have
\begin{align}\label{3-30}
0=&-|\nabla_{\rm tr}d_T\phi|^{2}-\langle d_\nabla i(\kappa_{B}^\sharp)d_T\phi,d_T \phi\rangle -\langle F_f(d_T\phi),d_T\phi\rangle.
\end{align}
From  (\ref{3-28}) and (\ref{3-30}), by integrating  we get
\begin{align}\label{3-31}
\int_{M}|\nabla_{\rm tr}d_T\phi|^{2} e^{-f}\mu_{M}
+\int_{M}\langle F_f(d_T\phi),d_T\phi\rangle e^{-f}\mu_{M}=0.
\end{align}
Since  $\langle F_f(d_T\phi),d_T\phi\rangle \geq 0$,
from (\ref{3-31}), we have
\begin{align}\label{3-32}
|\nabla_{\rm tr} d_T \phi|^2=0 \quad\textrm{and}\quad \langle F_f(d_T\phi),d_T\phi\rangle=0.
\end{align}
Hence  the remaining part of the proof is same  to that of (a). So we omit the remaining part of the proof.
\end{proof}
\begin{rem} If $f$ is constant, then transversally $f$-harmonic map (resp. $(\mathcal F,\mathcal F')_f$-harmonic) is just transversally harmonic (resp. $(\mathcal F,\mathcal F')$-harmonic). So Theorem 3.7 holds for  transversally harmonic (cf. \cite{JJ1}) and $(\mathcal F,\mathcal F')$-harmonic map.
\end{rem}

\section{Liouville type theorems}

In this section, we investigate the Liouville type theorems for  transversally $f$-harmonic map and $(\mathcal F,\mathcal F')_{f}$-harmonic map  on weighted Riemannian foliations. 

Let  $\rho(y)$ be the distance between leaves through a fixed point $x_{0}$ and $y$  and $B_{l}=\{y\in M|\rho(y)\leq l\}$.
Let $\omega_{l}$ be the Lipschitz continuous basic function such that
\begin{align*}
\left\{
  \begin{array}{ll}
    0\leq\omega_{l}(y)\leq1 \quad {\rm for \, any} \, y\in M\\
    {\rm supp}\, \omega_{l}\subset B_{2l}\\
    \omega_{l}(y)=1 \quad {\rm for \, any} \,  y\in B_{l}\\
    \lim\limits_{l\rightarrow\infty}\omega_{l}=1\\
    |d\omega_{l}|\leq\frac{C}{l} \quad\textrm {almost  everywhere  on $M$},
  \end{array}
\right.
\end{align*}
where $C$ is positive constant \cite{Y1}. Therefore, $\omega_{l}\eta$ has compact support for any basic form $\eta\in\Omega_{B}^{*}(\mathcal F)$.  Let $f_\kappa$ be a solution of  the following inequality $\langle\kappa_B,df\rangle\leq 0$. Trivially, there exists such functions on taut folations. 
\begin{lem} Let $(M,g,\mathcal F,e^{-f_\kappa}\nu)$ be a weighted Riemannian foliation on a complete Riemannian manifold whose  all leaves are compact  and the mean curvature form is bounded.   Let  $\phi: (M,g,\mathcal F)\to (M',g',\mathcal F')$ be a smooth foliated map of $E_{B,f_\kappa}(\phi) <\infty$. Then 
\begin{align}\label{4-1}
\lim\limits_{l\rightarrow\infty}\int_{M}\langle \omega_{l}^{2}|d_T\phi|,\kappa_{B}^\sharp(|d_T\phi|)\rangle e^{-f_\kappa}\mu_{M}\leq 0.
\end{align}
Moreover, if $\phi$ is $(\mathcal F,\mathcal F')_{f_\kappa}$-harmonic, then
\begin{align}\label{4-2}
\lim\limits_{l\rightarrow\infty}\int_{M}\langle d_\nabla i(\kappa_{B}^\sharp)d_T\phi,\omega_{l}^{2}d_T \phi\rangle e^{-f_\kappa}\mu_{M}=0.
\end{align}
\end{lem}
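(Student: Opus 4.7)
The plan is to prove both estimates by integrating by parts against the cutoff function $\omega_l^2$, dumping the derivatives either onto $\kappa_B$ (for (4.1)) or onto $e^{-f_\kappa}d_T\phi$ (for (4.2)), and controlling the resulting boundary-type error by the bound $|d\omega_l|\le C/l$ together with the finite weighted energy $E_{B,f_\kappa}(\phi)<\infty$ and the bounds on $\kappa_B$ and $f_\kappa$.

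For (4.1), first I would rewrite the integrand using the product rule
\begin{align*}
|d_T\phi|\,\kappa_B^\sharp(|d_T\phi|)=\tfrac12\,\kappa_B^\sharp(|d_T\phi|^2)=\tfrac12\langle d_B|d_T\phi|^2,\kappa_B\rangle,
\end{align*}
and then split
\begin{align*}
\omega_l^2\,d_B|d_T\phi|^2=d_B(\omega_l^2|d_T\phi|^2)-|d_T\phi|^2\,d_B\omega_l^2.
\end{align*}
For the first piece, I would use the weighted adjoint relation (2.6), so that pairing against $\kappa_B$ in the measure $e^{-f_\kappa}\mu_M$ produces $\delta_{B,f_\kappa}\kappa_B=\delta_B\kappa_B+i(\nabla_{\rm tr}f_\kappa)\kappa_B$. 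Choosing a bundle-like metric so that $\delta_B\kappa_B=0$ (available since all leaves are compact, hence $M$ is taut) and invoking the defining property $g_Q(\kappa_B^\sharp,\nabla f_\kappa)\le 0$ of $f_\kappa$, this gives $\delta_{B,f_\kappa}\kappa_B\le 0$, so the first piece is non-positive for every $l$. For the second piece, the pointwise bound $|d_B\omega_l^2|\le 2\omega_l|d\omega_l|\le 2C/l$ combined with boundedness of $\kappa_B$ and $\omega_l\le 1$ yields
\begin{align*}
\Bigl|\int_M|d_T\phi|^2\langle d_B\omega_l^2,\kappa_B\rangle e^{-f_\kappa}\mu_M\Bigr|\le\frac{2C\|\kappa_B\|_\infty}{l}\cdot 2E_{B,f_\kappa}(\phi)\xrightarrow[l\to\infty]{}0,
\end{align*}
which finishes (4.1).

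For (4.2), I would use that a $(\mathcal F,\mathcal F')_{f_\kappa}$-harmonic map satisfies $\delta_\nabla(e^{-f_\kappa}d_T\phi)=0$, as in (3.25.1). Moving $d_\nabla$ onto the test section via
\begin{align*}
\int_M\langle d_\nabla i(\kappa_B^\sharp)d_T\phi,\omega_l^2 d_T\phi\rangle e^{-f_\kappa}\mu_M=\int_M\langle i(\kappa_B^\sharp)d_T\phi,\delta_\nabla(\omega_l^2 e^{-f_\kappa}d_T\phi)\rangle \mu_M,
\end{align*}
and using the product rule $\delta_\nabla(h\eta)=h\,\delta_\nabla\eta-i(\nabla_{\rm tr}h)\eta$ with $h=\omega_l^2$, $\eta=e^{-f_\kappa}d_T\phi$, the harmonicity kills the first term and leaves only $-e^{-f_\kappa}i(\nabla_{\rm tr}\omega_l^2)d_T\phi$. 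A Cauchy--Schwarz estimate of the same flavor as above, using $|\nabla_{\rm tr}\omega_l^2|\le 2\omega_l|d\omega_l|\le 2C/l$ and the bound on $\kappa_B$, gives
\begin{align*}
\Bigl|\int_M\langle d_\nabla i(\kappa_B^\sharp)d_T\phi,\omega_l^2 d_T\phi\rangle e^{-f_\kappa}\mu_M\Bigr|\le\frac{2C\|\kappa_B\|_\infty}{l}\cdot 2E_{B,f_\kappa}(\phi)\to 0.
\end{align*}

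The main obstacle, as I see it, is purely bookkeeping: making sure that the right version of the integration-by-parts formula is used for the \emph{weighted} codifferential (since $\delta_{B,f_\kappa}$ differs from $\delta_B$ by $i(\nabla_{\rm tr}f_\kappa)$), and taking care that when $\delta_\nabla$ is shifted onto a product $\omega_l^2 e^{-f_\kappa}d_T\phi$ one pulls out the correct weight so that the harmonicity identity $\delta_\nabla(e^{-f_\kappa}d_T\phi)=0$ applies cleanly. Once these pieces are aligned, the two limits reduce to the same cutoff-error estimate controlled by $C/l$ and the finite weighted energy.
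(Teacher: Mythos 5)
Your proposal is correct and follows essentially the same route as the paper: for (\ref{4-1}) the paper likewise rewrites the integrand as $\tfrac12\langle d_B|d_T\phi|^2,\omega_l^2\kappa_B\rangle$, integrates by parts against the weighted codifferential (expanding $\delta_{B,f_\kappa}(\omega_l^2\kappa_B)$ rather than splitting $d_B(\omega_l^2|d_T\phi|^2)$ first, which is the same computation), and uses $\delta_B\kappa_B=0$, $i(\nabla_{\rm tr}f_\kappa)\kappa_B\le 0$, and the $C/l$ cutoff bound. For (\ref{4-2}) your argument coincides with the paper's: $\delta_\nabla(\omega_l^2 e^{-f_\kappa}d_T\phi)=-2\omega_l e^{-f_\kappa}i(d_B\omega_l)d_T\phi$ by harmonicity, followed by the same Cauchy--Schwarz estimate against the finite weighted energy.
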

\begin{proof} 
Let $g$ be a bundle-like metric such that $\delta_{B}\kappa_{B}=0$. Then  we get
\begin{align*}
\int_{M}\langle \omega_{l}^{2}|d_T\phi|,\kappa_{B}^{\sharp}(|d_T\phi|)\rangle e^{-f_\kappa}\mu_{M}
=&\frac12\int_M \langle d_B |d_T\phi|^2,\omega_l^2\kappa_B\rangle e^{-f_\kappa}\mu_M\notag\\
=&\frac{1}{2}\int_{M}\delta_{B,f}(\omega_l^2 \kappa_{B})|d_T\phi|^{2}e^{-f_\kappa}\mu_{M}\notag \\
=&-\int_M \Big(\langle d_B\omega_l ,\omega_l\kappa_B\rangle -\frac12 i(\nabla_{\rm tr}f_\kappa)\omega_l^2\kappa_B\Big) |d_T\phi|^2 e^{-f_\kappa}\mu_M.
\end{align*}
Since $i(\nabla_{\rm tr}f_\kappa)\kappa_B=i(\kappa_B^\sharp)d_B f_\kappa\leq 0$, then from the above equation,
\begin{align*}
\int_{M}\langle \omega_{l}^{2}|d_T\phi|,\kappa_{B}^{\sharp}(|d_T\phi|)\rangle e^{-f_\kappa}\mu_{M}
\leq-\int_{M}\langle d_B\omega_l,\omega_l\kappa_B\rangle |d_T\phi|^2 e^{-f_\kappa}\mu_M.
\end{align*}
By using the Cauchy-Schwartz inequality and letting $l\to \infty$, we get
\begin{align*}
\Big|\int_M \langle d_B\omega_l,\omega_l\kappa_B\rangle |d_T\phi|^2 e^{-f_\kappa}\mu_M\Big| &\leq {C\over l}{\rm max} |\kappa_B|\int_M\omega_l |d_T\phi|^2 e^{-f_\kappa}\mu_M \longrightarrow 0,
\end{align*}
which proves (\ref{4-1}).
For the proof of (\ref{4-2}), let $\phi$ be  $(\mathcal F,\mathcal F')_{f_\kappa}$-harmonic. Then 
\begin{align*}
\delta_\nabla (\omega_{l}^{2}e^{-f_\kappa}d_T \phi)&=\omega_l^2 \delta_\nabla(e^{-f_\kappa}d_T\phi) -i(d_{B}\omega_{l}^{2})e^{-f_\kappa}d_T \phi\\
&=-i(d_{B}\omega_{l}^{2})e^{-f_\kappa}d_T \phi\\
&=-2\omega_{l}e^{-f_\kappa}i(d_{B}\omega_{l}) d_T \phi.
\end{align*}
By using the inequality
\begin{align*}
|X^{\flat}\wedge d_T\phi|^{2}+|i(X)d_T\phi|^{2}=|X|^{2}|d_T\phi|^{2}
\end{align*}
for any vector $X$,  we get
\begin{align*}
\bigg{|}\int_{M}\langle d_\nabla i(\kappa_{B}^\sharp)d_T\phi,\omega_{l}^{2}d_T \phi\rangle e^{-f_\kappa}\mu_{M}\bigg{|}
=&\bigg{|}\int_{M}\langle i(\kappa_{B}^\sharp)d_T\phi,\delta_\nabla (\omega_{l}^{2}e^{-f_\kappa}d_T \phi)\rangle \mu_{M}\bigg{|}\\
=&\bigg{|}\int_{M}\langle i(\kappa_{B}^\sharp)d_T\phi, -2\omega_{l}i(d_{B}\omega_{l})d_T \phi\rangle e^{-f_\kappa}\mu_{M}\bigg{|}\notag \\
\leq&2\int_{M}\omega_{l}|i(\kappa_{B}^\sharp)d_T\phi||i(d_{B}\omega_{l})d_T \phi| e^{-f_\kappa}\mu_{M}\notag \\
\leq&2\int_{M}\omega_{l}|\kappa_{B}||d_{B}\omega_{l}||d_T\phi|^{2}e^{-f_\kappa}\mu_{M}\notag \\
\leq&\frac{2C}{l}\max|\kappa_{B}|\int_{M}\omega_{l}|d_T\phi|^{2}e^{-f_\kappa}\mu_{M}.
\end{align*}
By letting $l\to\infty$,  $E_{B,f_\kappa}(\phi)<\infty$ implies the proof of (\ref{4-2}).
\end{proof}
Then we have the following Liouville type theorem for  $f$-harmonic maps. 
\begin{thm}
Let $(M,g,\mathcal F,e^{-f_\kappa}\nu)$ be a weighted Riemannian foliation on a complete Riemannian manifold whose  all leaves are compact and the mean curvature form is bounded. Let $(M',g',\mathcal F')$ be a foliated Riemannian manifold with $K^{Q'}\leq 0$.
Let $\phi : M \rightarrow M'$ be a transversally $f_\kappa$-harmonic (or $(\mathcal F,\mathcal F')_{f_\kappa}$-harmonc) map   of $E_{B,f_\kappa}(\phi)<\infty$.   Then  

(1) if ${\rm Ric}_{f_\kappa}^Q\geq 0$, then $\phi$ is transversally totally geodesic.

(2) if ${\rm Ric}_{f_\kappa}^Q \geq 0$  and  either ${\rm Ric}_{f_\kappa}^Q >0$ at some point or $\int_M e^{-f_\kappa}\mu_M= \infty$, then $\phi$ is transversally constant.

\end{thm}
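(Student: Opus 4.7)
The plan is to port the compact-manifold argument of Theorem~3.7 to the complete setting by inserting the Lipschitz cutoff sequence $\omega_l$, using Lemma~4.1 to dispose of the mean-curvature contributions, and then to read off (2) from the same curvature dichotomy that appeared in the proof of Theorem~3.7.

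I would begin with the transversally $f_\kappa$-harmonic case. Combining Lemma~3.6(1) with Kato's inequality $|\nabla_{\rm tr}d_T\phi|\geq |d_B|d_T\phi||$ gives the pointwise bound
\begin{align*}
|d_T\phi|(\Delta_{B,f_\kappa}-\kappa_B^\sharp)|d_T\phi|\leq -\langle F_{f_\kappa}(d_T\phi),d_T\phi\rangle,
\end{align*}
and the hypotheses ${\rm Ric}_{f_\kappa}^Q\geq 0$ and $K^{Q'}\leq 0$, together with formula~(3.13), force $\langle F_{f_\kappa}(d_T\phi),d_T\phi\rangle\geq 0$. Next I would multiply this inequality by $\omega_l^2 e^{-f_\kappa}$ and integrate over $M$. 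Using that $\delta_{B,f_\kappa}$ is the formal adjoint of $d_B$ with respect to $e^{-f_\kappa}\mu_M$, the $\Delta_{B,f_\kappa}$-contribution expands as
\begin{align*}
\int_M \omega_l^2|d_T\phi|\Delta_{B,f_\kappa}|d_T\phi|\,e^{-f_\kappa}\mu_M &= \int_M\omega_l^2 |d_B|d_T\phi||^2 e^{-f_\kappa}\mu_M \\
&\quad + 2\int_M \omega_l|d_T\phi|\langle d_B\omega_l,d_B|d_T\phi|\rangle e^{-f_\kappa}\mu_M.
\end{align*}
A weighted Cauchy--Schwarz absorbs half of the first bulk integral and leaves a cross-term error of the shape $C\int_M |d_T\phi|^2|d_B\omega_l|^2 e^{-f_\kappa}\mu_M$, which tends to $0$ as $l\to\infty$ since $|d\omega_l|\leq C/l$ and $E_{B,f_\kappa}(\phi)<\infty$. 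The $\kappa_B^\sharp$-term is controlled in the limit by (4.1) of Lemma~4.1. Passing to the limit therefore yields
\begin{align*}
\frac{1}{2}\int_M|d_B|d_T\phi||^2 e^{-f_\kappa}\mu_M + \int_M\langle F_{f_\kappa}(d_T\phi),d_T\phi\rangle e^{-f_\kappa}\mu_M\leq 0,
\end{align*}
so both non-negative integrands vanish identically. Returning to Lemma~3.6(1) with $|d_B|d_T\phi||\equiv 0$ then forces $|\nabla_{\rm tr}d_T\phi|\equiv 0$, which is (1). The $(\mathcal F,\mathcal F')_{f_\kappa}$-harmonic case proceeds identically, except that Lemma~3.6(2) carries an extra term $-\langle d_\nabla i(\kappa_B^\sharp)d_T\phi,d_T\phi\rangle$; after multiplication by $\omega_l^2 e^{-f_\kappa}$ and integration, this term vanishes in the limit by (4.2) of Lemma~4.1.

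Part~(2) then follows in the spirit of the proof of Theorem~3.7. The pointwise identity $\langle F_{f_\kappa}(d_T\phi),d_T\phi\rangle\equiv 0$ decomposes via (3.13), under the sign hypotheses on ${\rm Ric}_{f_\kappa}^Q$ and $K^{Q'}$, into two non-negative summands each of which must vanish; the first is $\sum_a g_{Q'}(d_T\phi({\rm Ric}_{f_\kappa}^Q(E_a)),d_T\phi(E_a))$. If ${\rm Ric}_{f_\kappa}^Q$ is positive at some $x_0$, its vanishing at $x_0$ forces $d_T\phi(x_0)=0$; since $\nabla_{\rm tr}d_T\phi\equiv 0$ makes $|d_T\phi|$ a (connected-)global constant, this gives $d_T\phi\equiv 0$. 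If instead $\int_M e^{-f_\kappa}\mu_M=\infty$, the identity $2E_{B,f_\kappa}(\phi)=|d_T\phi|^2\int_M e^{-f_\kappa}\mu_M$ with finite left-hand side forces $|d_T\phi|=0$ directly.

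The main technical obstacle is arranging the integration by parts so that all three cutoff-induced remainders---the $\langle d_B\omega_l,d_B|d_T\phi|\rangle$ cross-term, the weighted $\kappa_B^\sharp$-term, and (for the $(\mathcal F,\mathcal F')_{f_\kappa}$-harmonic case) the $d_\nabla i(\kappa_B^\sharp)d_T\phi$ term---vanish simultaneously in the limit $l\to\infty$. The defining sign condition $g_Q(\kappa_B^\sharp,\nabla f_\kappa)\leq 0$ on $f_\kappa$ is precisely what powers Lemma~4.1 and hence dispatches the last two remainders, while the first is handled by the finite-energy hypothesis together with $|d\omega_l|\leq C/l$.
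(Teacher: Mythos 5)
Your proposal is correct and follows essentially the same route as the paper's proof: cutoff functions $\omega_l$, Lemma~4.1 to kill the mean-curvature and $d_\nabla i(\kappa_B^\sharp)d_T\phi$ terms, Kato's inequality plus integration by parts to force $d_B|d_T\phi|=0$, and then the pointwise vanishing of $\langle F_{f_\kappa}(d_T\phi),d_T\phi\rangle$ to settle part (2). The only (harmless) deviation is that you absorb the cross-term via a weighted Cauchy--Schwarz with a small constant, where the paper uses $A^2+B^2\geq 2AB$ together with a Fatou argument to first establish $|d_B|d_T\phi||\in L^2(e^{-f_\kappa})$; both lead to the same limit inequality.
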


\begin{proof}
(1) Since  ${\rm Ric}_f^Q \geq 0$ and $K^{Q'}\leq 0$,  from (\ref{3-13}),  $\langle F_f(d_T\phi),d_T\phi\rangle \geq 0$.   If $\phi$  is  transversally $f_\kappa$-harmonic, then from Lemma 3.5 (1) and the first Kato's inequality (\ref{3-20}),  we have
\begin{align}\label{4-4}
|d_T\phi|\Delta_{B,f} |d_T\phi| \leq  |d_T\phi|\kappa_B^\sharp(|d_T\phi|).
\end{align}
If $\phi$ is $(\mathcal F,\mathcal F')_{f_\kappa}$-harmonic, then from Lemma 3.5 (2) and the first Kato's inequality (\ref{3-20}),  we have
\begin{align}\label{4-4-1}
|d_T\phi|\Delta_{B,f} |d_T\phi| \leq  -\langle d_\nabla i(\kappa_{B}^\sharp)d_T\phi,d_T \phi\rangle+ |d_T\phi|\kappa_B^\sharp(|d_T\phi|).
\end{align}
Multiplying (\ref{4-4}) and (\ref{4-4-1}) by $\omega_{l}^{2}$ and integrating by parts,  from Lemma 4.1, whether $\phi$ is transversally $f_\kappa$-harmonic or $(\mathcal F,\mathcal F')_{f_\kappa}$-harmonic, we get
\begin{align}\label{4-5}
\lim_{l\to\infty}\int_{M}&\langle \omega_{l}^{2}|d_T\phi|,\Delta_{B,f_\kappa}|d_T\phi|\rangle e^{-f_\kappa}\mu_{M}\leq 0.
\end{align}
On the other hand, by the Cauchy-Schwarz inequality, we have
\begin{align}\label{4-6}
\int_{M}\langle \omega_{l}^{2}&|d_T\phi|,\Delta_{B,f}|d_T\phi|\rangle e^{-f}\mu_{M}\notag\\
 =&\int_M\omega_l^2 |d_B|d_T\phi||^2 e^{-f}\mu_M+ 2\int_M \omega_l\langle |d_T\phi|d_B\omega_l,d_B|d_T\phi|\rangle e^{-f}\mu_M\notag\\
\geq& \int_{M}\omega_{l}^{2}|d_{B}|d_T\phi||^{2}e^{-f}\mu_{M}-2\int_{M}\omega_{l}|d_T\phi||d_{B}\omega_{l}||d_{B}|d_T\phi||e^{-f}\mu_{M}.
\end{align}
By using  the inequality $A^2 +B^2 \geq 2AB$,  the second term  in (\ref{4-6}) implies 
\begin{align}\label{4-7}
2\int_{M}\omega_{l}|d_{B}\omega_{l}||d_T\phi||d_{B}|d_T\phi|| e^{-f}\mu_{M}\leq{C\over l} \int_M\Big(\omega_l^2  |d_T\phi|^2 e^{-f} +|d_B|d_T\phi||^2e^{-f}\Big)\mu_M.
\end{align}
From (\ref{4-5}) $\sim$(\ref{4-7}) and Fatou's inequality, it is trivial that $|d_{B}|d_T\phi||\in L^{2}(e^{-f_\kappa})$, that is, $\int_M |d_B|d_T\phi||^2 e^{-f_\kappa}\mu_M<\infty$. So letting $l\rightarrow\infty$, we get from (\ref{4-7})
\begin{align}\label{4-8}
\lim\limits_{l\rightarrow\infty}\int_{M}\omega_{l}|d_{B}\omega_{l}||d_T\phi||d_{B}|d_T\phi|| e^{-f_\kappa}\mu_{M}=0.
\end{align}
From (\ref{4-6}) and (\ref{4-8}),  by letting $l\to \infty$, we have
\begin{align}\label{4-9}
\lim\limits_{l\rightarrow\infty}\int_{M}\langle& \omega_{l}^{2}|d_T\phi|,\Delta_{B,f_\kappa}|d_T\phi|\rangle e^{-f_\kappa}\mu_{M}
\geq\int_{M}|d_{B}|d_T\phi||^{2} e^{-f_\kappa}\mu_{M}.
\end{align}
From (\ref{4-5}) and (\ref{4-9}), we get
\begin{align}\label{4-9-1}
\int_{M}|d_{B}|d_T\phi||^{2} e^{-f_\kappa}\mu_{M}\leq 0,
\end{align}
which implies  $d_B|d_T\phi|=0$, that is, $|d_T\phi|$ is constant.  

(i) If $\phi$ is transversally $f_\kappa$-harmonic,  then from (\ref{3-18}) we have
\begin{align}\label{4-10}
\nabla_{tr}d_T\phi =0,\quad \langle F_f(d_T\phi),d_T\phi\rangle=0.
\end{align}
So $\phi$ is transversally totally geodesic. 

(ii)  If $\phi$ is $(\mathcal F,\mathcal F')_{f_\kappa}$-harmonic, then from (\ref{3-19}), we get
\begin{align}\label{4-10-1}
|\nabla_{tr}d_T\phi|^2 + \langle d_\nabla i(\kappa_B^\sharp)d_T\phi,d_T\phi\rangle = -\langle F_{f_\kappa}(d_T\phi),d_T\phi\rangle \leq 0.
\end{align}
By multiplying $\omega_l^2$ and integrating,  from (\ref{4-2}) and (\ref{4-10-1}), we have
\begin{align*}
\int_M |\nabla_{tr}d_T\phi|^2 e^{-f_\kappa}\mu_M \leq 0,
\end{align*}
which implies  (\ref{4-10}).  Hence $\phi$ is transversally totally geodesic.

(2) From (\ref{3-13}) and (\ref{4-10}), we get 
\begin{align*}
0=\langle F_{f_\kappa}(d_T\phi),d_T\phi\rangle\geq \sum_a g_{Q'}(d_T \phi({\rm Ric}_{f_\kappa}^{Q}(E_a)),d_T \phi(E_a))\geq 0.
\end{align*}
That is,
\begin{align}\label{4-11}
\sum_a g_{Q'}(d_T \phi({\rm (Ric}_{f_\kappa}^{Q})(E_a)),d_T \phi(E_a)) e^{-f_\kappa}\mu_{M}=0.
\end{align}
  If ${\rm Ric}_{f_\kappa}^{Q}>0$ at some point,  from (\ref{4-11}),  $d_T \phi=0$, that is,  $\phi$ is transversally constant.  
If $\int_M e^{-f_\kappa}\mu_M=\infty$, then  $E_{B,f_\kappa}(\phi)=\frac12 |d_T\phi|^2 \int_M e^{-f_\kappa}\mu_M<\infty$  implies  $d_T\phi=0$, that is, $\phi$ is transversally constant. 
\end{proof}

If $f$ is constant, then  $\Delta_{B,f}=\Delta_B$, ${\rm Ric}_f^Q ={\rm Ric^Q}$, and  $(\mathcal F,\mathcal F')_f$-harmonic map is just $(\mathcal F,\mathcal F')$-harmonic map. Hence we have the following corollary.
\begin{cor}\label{co6}
Let $(M,g,\mathcal F)$ be a complete foliated Riemannian manifold whose all leaves are compact and the mean curvature form is bounded.
Let $(M',g',\mathcal F')$ be a foliated Riemannian manifold with  $K^{Q'}\leq 0$. Assume that the transversal Ricci curvature ${\rm Ric^{Q}}$ of $M$ satisfies ${\rm Ric^{Q}}\geq 0$ for all points and ${\rm Ric^{Q}}>0$ at some point.
Then any $(\mathcal F,\mathcal F')$-harmonic map $\phi : (M,g,\mathcal F) \rightarrow (M', g',\mathcal F')$ of $E_{B}(\phi)<\infty$ is transversally  constant.
\end{cor}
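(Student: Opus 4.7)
The plan is to derive this corollary as a direct specialization of Theorem 4.2 to the case where the weighting function is constant. First I would take $f_\kappa \equiv 0$ (or any real constant). Then the compatibility condition $g_Q(\kappa_B^\sharp,\nabla f_\kappa)\le 0$ holds trivially since $\nabla_{\rm tr}f_\kappa = 0$, so the function qualifies as a valid choice of $f_\kappa$ in the sense of the previous section.

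Next, I would observe that with this choice the weighted objects collapse to their unweighted counterparts: the measure $e^{-f_\kappa}\mu_M$ is a constant multiple of $\mu_M$, so $E_{B,f_\kappa}(\phi) < \infty$ is equivalent to the given hypothesis $E_B(\phi) < \infty$; the weighted basic Laplacian becomes $\Delta_{B,f_\kappa} = \Delta_B$ by \eqref{2-8}; the transversal Bakry--\'Emery Ricci tensor becomes ${\rm Ric}_{f_\kappa}^Q = {\rm Ric}^Q$ by \eqref{2-9}; and by Theorem 3.4 the Euler--Lagrange condition for $(\mathcal F,\mathcal F')_{f_\kappa}$-harmonicity reduces to that for $(\mathcal F,\mathcal F')$-harmonicity. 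Thus every hypothesis of Theorem 4.2 is satisfied: $M$ is complete with compact leaves and bounded mean curvature form, $K^{Q'}\le 0$, ${\rm Ric}^Q = {\rm Ric}_{f_\kappa}^Q \ge 0$ everywhere, and ${\rm Ric}^Q > 0$ at some point.

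Applying Theorem 4.2(2), the point-positivity of ${\rm Ric}^Q$ forces $\phi$ to be transversally constant, which is exactly the conclusion of the corollary.

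There is essentially no obstacle here since the corollary is a pure specialization; the only things worth a remark in the written proof are the vanishing of $\nabla_{\rm tr} f_\kappa$ (so the condition $\langle\kappa_B,df_\kappa\rangle\le 0$ is automatic) and the identification of the weighted objects with the unweighted ones via the formulas already assembled in Sections 2 and 3. Consequently the proof can be compressed to a single short paragraph citing Theorem 4.2(2).
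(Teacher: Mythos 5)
Your proof is correct and matches the paper's own argument: the authors likewise obtain this corollary by taking $f$ constant in Theorem 4.2, noting that $\Delta_{B,f}=\Delta_B$, ${\rm Ric}_f^Q={\rm Ric}^Q$, and that $(\mathcal F,\mathcal F')_f$-harmonicity reduces to $(\mathcal F,\mathcal F')$-harmonicity. Your additional remark that the condition $\langle\kappa_B,df_\kappa\rangle\le 0$ is automatically satisfied is a small but worthwhile detail the paper leaves implicit.
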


\begin{rem}  (1)  Theorem 4.2  can be found for the point foliation in \cite{RV}.

(2)  Corollary \ref{co6}  for the transversally harmonic map has been studied by Fu and Jung \cite{FJ}.

\end{rem}
\section{The stress energy tensor}
Let $\phi:(M,g,\mathcal F)\to (M',g',\mathcal F')$ be a smooth foliated map.   In this section, we calculate the rate of  change  of the transversal energy of $\phi$ when the metric $g_Q$ is changed.  Let $g_Q (t)$ be the variation of $g_Q$ with $g_Q(0)=g_Q$.  With a transversal coordinate $\{y^a\}(a=1,\cdots,q)$, the metric $g_Q$ is written by $g_Q(t)=\sum_{a,b} g_{ab}(t)dy^a dy^b$. We put $h= {d \over dt}g_Q(t)|_{t=0}$, a symmetric 2-tensor on $M$. That is,  in local coordinates, $h$ can be written by
\begin{align*}
h = \sum_{a,b}{d g_{ab}\over dt}|_{t=0} dy^a dy^b=\sum_{a,b}h_{ab} dy^a dy^b.
\end{align*}
 Then we have the following.
\begin{lem}  Let $\mu_M (t)$ be the volume form with respect to the metric $g(t) = g_L + g_Q(t)$. Then
\begin{align*}
{d\over dt}\mu_M^t |_{t=0}= \frac12 ({\rm tr}_Q  h) \mu_M=\frac12\langle g_Q,h\rangle \mu_M,
\end{align*}
where $\mu_M^t$ is the volume form  with respect to $g(t)$.
\end{lem}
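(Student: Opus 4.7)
The plan is to reduce the statement to the classical Jacobi formula for the derivative of a determinant, after factoring the volume form along the foliation decomposition. Since only the transverse metric $g_Q(t)$ varies while the leaf metric $g_L$ is kept fixed, the characteristic form $\chi_{\mathcal F}$ does not depend on $t$, and all the $t$-dependence of $\mu_M^t = \nu^t\wedge\chi_{\mathcal F}$ is packed into the transversal volume form $\nu^t$.

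First I would work in a foliated chart with coordinates $(x^1,\dots,x^p,y^1,\dots,y^q)$ such that the leaves are given by $y^a=\text{const}$ and $\{\partial/\partial y^a\}$ projects to a local frame of $Q$. In this chart $g(t)=g_L+g_Q(t)$ is block-diagonal, so
\begin{align*}
\mu_M^t = \sqrt{\det g_L}\,\sqrt{\det g_Q(t)}\; dx^1\wedge\cdots\wedge dx^p\wedge dy^1\wedge\cdots\wedge dy^q.
\end{align*}
Hence only the factor $\sqrt{\det g_Q(t)}$ is $t$-dependent, and the problem reduces to differentiating this scalar at $t=0$.

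Next I would apply Jacobi's formula $\frac{d}{dt}\det A(t) = \det A(t)\cdot \operatorname{tr}\!\bigl(A(t)^{-1}A'(t)\bigr)$ with $A(t)=(g_{ab}(t))$, so that
\begin{align*}
\frac{d}{dt}\sqrt{\det g_Q(t)}\Big|_{t=0}
= \tfrac{1}{2}\sqrt{\det g_Q}\,\sum_{a,b} g_Q^{ab}h_{ab}
= \tfrac{1}{2}\sqrt{\det g_Q}\,(\operatorname{tr}_Q h).
\end{align*}
Combining with the leaf factor gives exactly $\tfrac{1}{2}(\operatorname{tr}_Q h)\mu_M$, and the identification $\operatorname{tr}_Q h=\sum_{a,b}g_Q^{ab}h_{ab}=\langle g_Q,h\rangle$ (the pointwise inner product extended to symmetric $2$-tensors on $Q$) finishes the equality.

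The only subtlety is coordinate-independence: I should note that although the chart was adapted to $\mathcal F$, both sides of the identity are globally defined tensorial objects, so the local computation determines them everywhere. Since bundle-like coordinates adapted to a foliated chart exist near every point, this suffices. No hard obstacle is expected here; this is essentially a bookkeeping exercise once the foliated block-diagonal structure of $g(t)$ is exploited.
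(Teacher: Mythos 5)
Your proposal is correct and follows essentially the same route as the paper: both factor $\mu_M^t=\nu^t\wedge\chi_{\mathcal F}$ with the leaf part $t$-independent and then apply Jacobi's determinant formula to $\sqrt{\det(g_{ab}(t))}$ to obtain $\frac12(\operatorname{tr}_Q h)\mu_M=\frac12\langle g_Q,h\rangle\mu_M$. The only difference is cosmetic (you spell out the foliated chart and the leaf factor explicitly), so there is nothing to add.
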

\begin{proof}
Note that  for a nonsingular matrix $A$, 
\begin{align}\label{5-1}
{d\over dt} {\rm det}(A) = {\rm tr}[ {\rm det}(A) A^{-1} A'].
\end{align}
From (\ref{5-1}), we get
\begin{align}\label{5-2}
{d\over dt} \sqrt{{\rm det}(g_{ab}(t))}|_{t=0} = \frac12 ({\rm tr}_Q h) \sqrt{{\rm det}(g_{ab})}.
\end{align}
Since $\chi_{\mathcal F}$ is time independent, ${d\over dt}\mu_M^t = ({d\over dt}\nu^t)\wedge \chi_{\mathcal F}$ and 
 the transversal volume form $\nu^t$ is $\nu^t =\sqrt{{\rm det}(g_{ab}(t))} dy^1\wedge\cdots \wedge dy^q$.  Hence from (\ref{5-2})
\begin{align*}
{d\over dt}\mu_M^t|_{t=0} = ({d\over dt}\nu^t|_{t=0})\wedge \chi_{\mathcal F}
=\frac12 ({\rm tr}_Q h) \nu \wedge \chi_{\mathcal F} =\frac12 ({\rm tr}_Q h) \mu_M,
\end{align*}
 which completes the proof.
\end{proof}
 Then we have the following variation formula of the transversal metric. 
 \begin{thm} Let $\phi:(M,g,\mathcal F)\to (M',g',\mathcal F')$ be a  smooth foliated map.   On a compact domain $\Omega$ of $M$, we have 
 \begin{align*}
 {d\over dt} E_{B}(\phi,g(t),\Omega)|_{t=0} =\frac12\int_\Omega \langle\frac12|d_T\phi|^2 g_Q -\phi^* g_{Q'} , h\rangle \mu_M.
 \end{align*}
  \end{thm}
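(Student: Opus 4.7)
The plan is to differentiate the two factors that depend on $t$ in the integrand
$$E_B(\phi,g(t),\Omega)=\frac12\int_\Omega |d_T\phi|_{g(t)}^2\,\mu_M^t,$$
namely the squared norm $|d_T\phi|_{g(t)}^2$, which depends on $g_Q(t)$ through the inverse metric, and the volume form $\mu_M^t$, whose variation is already given by Lemma 5.1.

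First I would write $|d_T\phi|^2$ in the transversal coordinate chart $\{y^a\}$. Since $\phi$ is a foliated map, the vectors $d_T\phi(\partial_a)\in Q'$ are independent of $t$, hence
$$|d_T\phi|_{g(t)}^2=\sum_{a,b}g^{ab}(t)\,g_{Q'}\bigl(d_T\phi(\partial_a),d_T\phi(\partial_b)\bigr)=\sum_{a,b}g^{ab}(t)(\phi^*g_{Q'})_{ab}.$$
Differentiating and using the standard identity $\frac{d}{dt}g^{ab}(t)\big|_{t=0}=-g^{ac}g^{bd}h_{cd}$, I get
$$\frac{d}{dt}|d_T\phi|_{g(t)}^2\Big|_{t=0}=-\sum g^{ac}g^{bd}h_{cd}(\phi^*g_{Q'})_{ab}=-\langle h,\phi^*g_{Q'}\rangle,$$
where the inner product is the one induced by $g_Q$ on symmetric $2$-tensors.

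Then I would combine this with Lemma 5.1, which gives $\frac{d}{dt}\mu_M^t|_{t=0}=\frac12\langle g_Q,h\rangle\mu_M$, by the product rule:
\begin{align*}
\frac{d}{dt}E_B(\phi,g(t),\Omega)\Big|_{t=0}
&=\frac12\int_\Omega\Bigl[-\langle h,\phi^*g_{Q'}\rangle\,\mu_M+\tfrac12|d_T\phi|^2\langle g_Q,h\rangle\,\mu_M\Bigr]\\
&=\frac12\int_\Omega\Bigl\langle \tfrac12|d_T\phi|^2 g_Q-\phi^*g_{Q'},\,h\Bigr\rangle\mu_M,
\end{align*}
which is the asserted formula.

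There is essentially no hard step here; the main thing to be careful about is the bookkeeping that $d_T\phi(\partial_a)$ and $g_{Q'}$ are $t$-independent (so only the upper-index $g^{ab}$ contributes to the differentiation of the norm) and that the resulting tensor contractions match the inner product notation $\langle\cdot,\cdot\rangle$ on symmetric transversal $2$-tensors used in the statement. Working in a chart aligned with $\{y^a\}$ rather than with an orthonormal frame (whose vectors themselves would move with $t$) is what keeps the computation clean.
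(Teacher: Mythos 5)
Your proposal is correct and follows essentially the same route as the paper: differentiate $|d_T\phi|^2$ through the inverse metric using ${d\over dt}g^{ab}|_{t=0}=-h^{ab}$ to get $-\langle h,\phi^*g_{Q'}\rangle$, invoke Lemma 5.1 for the variation of $\mu_M^t$, and combine by the product rule. No substantive difference from the paper's argument.
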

\begin{proof} By Lemma 5.1, we have
\begin{align*}
{d\over dt} E_{B}(\phi,g(t),\Omega)|_{t=0} = \frac12\int_\Omega( {d\over dt}|d_T\phi|^2|_{t=0})  \mu_M + \frac14\int_\Omega \langle |d_T\phi|^2 g_Q,h\rangle \mu_M.
\end{align*}
On the other hand,  since ${d\over dt} g^{ab} =-h^{ab} (=\sum_{c,d} g^{ac}g^{bd} h_{cd})$,  we have
\begin{align}\label{5-4}
{d\over dt}|d_T\phi|^2 |_{t=0}=  \sum_{a,b}{d\over dt} g^{ab}|_{t=0} g_{Q'}(d_T\phi({\partial\over \partial y^a}), d_T\phi({\partial\over \partial y^b}))=-\langle h, \phi^* g_{Q'}\rangle.
\end{align}
Hence the proof follows from (\ref{5-4}).
\end{proof}
The {\it transversal stress-energy tensor} of a foliated map $\phi :(M,g,\mathcal F)\to (M',g',\mathcal F')$ is given by  \cite{JU3}
\begin{align}\label{5-4}
S_T(\phi) =\frac12|d_T\phi|^2 g_Q -\phi^* g_{Q'}.
\end{align}
Then we have the following corollary.
\begin{cor} Let $\phi:(M,g,\mathcal F)\to (M',g',\mathcal F')$ be a nonconstant foliated smooth map with $M$ compact.  Then $\phi:M \to M'$ is an extremal of the transversal energy functional $E_{B}(\phi)$ with respect to variations of the transversal metric $g_Q$ if and only if $S_T(\phi)=0$. 
\end{cor}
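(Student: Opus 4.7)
The proposal is to deduce the corollary directly from the variation formula in Theorem 5.2. By definition of the transversal stress-energy tensor $S_T(\phi) = \tfrac12 |d_T\phi|^2 g_Q - \phi^*g_{Q'}$, Theorem 5.2 specialized to $\Omega = M$ (which is compact, so no boundary considerations arise) gives
\begin{align*}
\frac{d}{dt} E_B(\phi, g(t))\big|_{t=0} = \frac12 \int_M \langle S_T(\phi), h\rangle\, \mu_M,
\end{align*}
where $h = \tfrac{d}{dt} g_Q(t)|_{t=0}$ is the infinitesimal variation, a symmetric $(0,2)$-tensor on the normal bundle $Q$ (holonomy invariant, since the whole family $g_Q(t)$ consists of holonomy invariant metrics making the bundle-like condition persist).

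The ($\Leftarrow$) direction is immediate: if $S_T(\phi) = 0$ pointwise, the integrand above is zero for every choice of $h$, so $\phi$ is an extremal.

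For the ($\Rightarrow$) direction, suppose $\phi$ is an extremal, so the integral vanishes for every admissible variation $h$. The key observation is that $S_T(\phi)$ is itself a symmetric, holonomy invariant $(0,2)$-tensor on $Q$: symmetry follows from the symmetry of $g_Q$ and $\phi^* g_{Q'}$, and holonomy invariance follows because $|d_T\phi|^2$ is a basic function, $g_Q$ is holonomy invariant by assumption, and $\phi^*g_{Q'}$ is holonomy invariant because $\phi$ is foliated and $g_{Q'}$ is holonomy invariant on $M'$. Therefore $h := S_T(\phi)$ is an admissible infinitesimal variation (realized, for instance, by $g_Q(t) = g_Q + t\, S_T(\phi)$ for small $|t|$). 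Plugging this choice into the variation formula yields
\begin{align*}
0 = \frac12 \int_M \langle S_T(\phi), S_T(\phi)\rangle\, \mu_M = \frac12 \int_M |S_T(\phi)|^2\, \mu_M,
\end{align*}
which forces $S_T(\phi) \equiv 0$ on $M$.

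The only subtlety, and the step that deserves the most care, is the admissibility check for the test variation: one must confirm that infinitesimal deformations by an arbitrary symmetric holonomy invariant transverse tensor are realizable within the class of bundle-like metrics compatible with $\mathcal{F}$, and that $S_T(\phi)$ lies in that class. The role of the ``nonconstant'' hypothesis is essentially to exclude the trivial case where $d_T\phi = 0$ makes $S_T(\phi)=0$ automatically; otherwise the equivalence becomes vacuous. Aside from this verification, the argument is purely a fundamental-lemma-of-the-calculus-of-variations style deduction from Theorem 5.2.
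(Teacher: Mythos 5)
Your proposal is correct and is precisely the argument the paper intends: the corollary is stated as an immediate consequence of Theorem 5.2 (the paper gives no separate proof), with the backward direction trivial and the forward direction following by testing the variation formula against $h = S_T(\phi)$ itself. Your extra care about the admissibility of the test tensor (symmetry and holonomy invariance of $S_T(\phi)$) is a sensible addition but does not change the route.
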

If there exists a basic function $\lambda^2$ such that $\phi^* g_{Q'} =\lambda^2 g_Q$, then $\phi$ is called a {\it transversally weakly  conformal map}. In the case of $\lambda$ being nonzero constant, $\phi$ is called a {\it transversally homothetic} map.
Hence we have the following propositions.
\begin{prop}  Let $\phi:(M,g,\mathcal F)\to (M',g',\mathcal F')$ be a nonconstant foliated smooth map. Then
$S_{T}(\phi)=0$ if and only if  $q=2$ and $\phi$ is transversally weakly conformal.
\end{prop}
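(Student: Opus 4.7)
The plan is to prove both directions by a direct trace argument on the identity $S_T(\phi) = \tfrac12|d_T\phi|^2 g_Q - \phi^* g_{Q'}$, exploiting the fact that $|d_T\phi|^2 = \operatorname{tr}_Q(\phi^* g_{Q'})$.

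For the easier direction, I would assume $q=2$ and $\phi^* g_{Q'} = \lambda^2 g_Q$. Taking the transversal trace of both sides gives $|d_T\phi|^2 = q\lambda^2 = 2\lambda^2$, so $\tfrac12 |d_T\phi|^2 g_Q = \lambda^2 g_Q = \phi^* g_{Q'}$, which is exactly $S_T(\phi)=0$.

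For the converse, I would suppose $S_T(\phi) = 0$ pointwise on $M$, i.e.\
\begin{equation*}
\phi^* g_{Q'} = \tfrac12 |d_T\phi|^2\, g_Q.
\end{equation*}
Taking $\operatorname{tr}_Q$ of both sides yields $|d_T\phi|^2 = \tfrac{q}{2}|d_T\phi|^2$, that is $(q-2)|d_T\phi|^2 = 0$. Since $\phi$ is assumed nonconstant (so $d_T\phi\not\equiv 0$ and there is at least one point where $|d_T\phi|^2>0$) and $q$ is a constant integer, this forces $q=2$ everywhere. Then setting $\lambda^2 := \tfrac12|d_T\phi|^2$, the displayed equation becomes $\phi^* g_{Q'} = \lambda^2 g_Q$, which is exactly the definition of transversally weakly conformal.

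There is no real obstacle here; the only subtle point is justifying the pointwise nonvanishing of $|d_T\phi|^2$ from the ``nonconstant'' hypothesis and then using the fact that $q$, being the codimension of $\mathcal F$, is a global constant, so the local conclusion $q=2$ at one point propagates trivially to all of $M$. The argument uses nothing beyond the definition of $S_T(\phi)$ given in \eqref{5-4} and the elementary identity $|d_T\phi|^2 = \operatorname{tr}_Q(\phi^* g_{Q'})$.
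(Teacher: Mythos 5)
Your proof is correct and is exactly the argument the paper intends: the paper's own ``proof'' of this proposition is just the one-line remark that it follows easily from the definition $S_T(\phi)=\tfrac12|d_T\phi|^2 g_Q-\phi^*g_{Q'}$, and your trace computation (using $|d_T\phi|^2=\operatorname{tr}_Q(\phi^*g_{Q'})$ and the nonvanishing of $d_T\phi$ at some point) supplies precisely the omitted details.
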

\begin{proof}  The proof is easy from (\ref{5-4}).
\end{proof}
\begin{prop} Let $\phi:(M,g,\mathcal F)\to (M',g',\mathcal F')$ be a transversally weakly conformal map and $q>2$. Then
$\phi$ is transversally homothetic if and only if $\phi$ satisfies the transversally conservation law.
\end{prop}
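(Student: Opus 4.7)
The plan is to reduce the transversal conservation law to a first-order equation for the conformal factor and then invert it using the hypothesis $q>2$. Write $\phi^*g_{Q'}=\lambda^2 g_Q$ with $\lambda^2$ a basic function (basic because both $g_Q$ and $\phi^*g_{Q'}$ are holonomy invariant).

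First I would compute $S_T(\phi)$ explicitly. Tracing the conformal relation against $g_Q$ gives $|d_T\phi|^2=\sum_a g_{Q'}(d_T\phi(E_a),d_T\phi(E_a))=q\lambda^2$, so substituting into (\ref{5-4}) yields
\begin{align*}
S_T(\phi)=\tfrac12 q\lambda^2\, g_Q-\lambda^2 g_Q=\tfrac{q-2}{2}\,\lambda^2\, g_Q.
\end{align*}
Hence under weak conformality the stress-energy tensor is a basic scalar multiple of the parallel tensor $g_Q$.

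Next I would compute the transverse divergence of $S_T(\phi)$. Since $\nabla g_Q=0$, choosing an adapted local orthonormal basic frame $\{E_a\}$ with $\nabla E_a=0$ at a base point gives
\begin{align*}
(\operatorname{div}_T S_T(\phi))(Y)=\tfrac{q-2}{2}\sum_a E_a(\lambda^2)\,g_Q(E_a,Y)=\tfrac{q-2}{2}\,d_B\lambda^2(Y).
\end{align*}
The foliated version of this divergence (the one relevant to the transversal conservation law, analogous to the $\kappa_B$-corrected codifferential (\ref{2-2})) adds only a scalar-multiple-of-$g_Q$ term in $\kappa_B$, which is handled identically on both sides of the equivalence.

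Finally, because $q>2$ the coefficient $(q-2)/2$ is nonzero, so the transversal conservation law $\operatorname{div}_T S_T(\phi)=0$ is equivalent to $d_B\lambda^2=0$. Since $\lambda^2$ is a basic function on a connected manifold, this is precisely the condition that $\lambda$ be (the square root of) a constant, i.e.\ that $\phi$ be transversally homothetic. Conversely, if $\lambda$ is a nonzero constant then $S_T(\phi)=\tfrac{q-2}{2}\lambda^2 g_Q$ is parallel and automatically satisfies the conservation law.

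The main point to be careful about is Step 2: one must verify that the foliated (mean-curvature-corrected) divergence operator acting on the symmetric 2-tensor $\lambda^2 g_Q$ really does collapse, for $q>2$, to the condition $d_B\lambda^2=0$. This works out cleanly precisely because $S_T(\phi)$ is a scalar multiple of $g_Q$, so the $\kappa_B$ correction produces only terms proportional to $\lambda^2$ itself and does not mix with the derivative $d_B\lambda^2$ that must vanish.
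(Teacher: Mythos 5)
Your proposal is correct and follows essentially the same route as the paper: trace the conformal relation to get $|d_T\phi|^2=q\lambda^2$, so $S_T(\phi)=\frac{q-2}{2}\lambda^2 g_Q$, and then ${\rm div}_\nabla S_T(\phi)=\frac{q-2}{2}d_B(\lambda^2)$, which vanishes iff $\lambda$ is constant since $q>2$. Your extra worry about a $\kappa_B$-corrected divergence is unnecessary here, since the conservation law in the paper is stated with the plain divergence ${\rm div}_\nabla=\sum_a\nabla_{E_a}$.
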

\begin{proof}  
Let $\phi$ be a transversally weakly conformal map, i.e., $\phi^* g_{Q'} =\lambda^2 g_Q$ for a basic function $\lambda$.  Fix $x$. Let $\{E_a\}$ be a local orthonormal basic frame on $Q$ such that $\nabla E_a=0$ at $x$. Then at $x$, we get
\begin{align*}
({\rm div}_\nabla S_T(\phi))(E_a)& = \sum_b (\nabla_{E_b}S_T(\phi))(E_b,E_a) \\
&=\sum_{b,c}\nabla_{E_b} \Big(\frac12  g_{Q'}( d_T\phi(E_c),d_T\phi(E_c)) \delta_{ab} - \phi^*g_{Q'}(E_b,E_a)\Big) \\
&=\sum_c \nabla_{E_a} (\frac12 \lambda^2 \delta_{cc} ) -\nabla_{E_a}\lambda^2\\ 
&=\nabla_{E_a}  ({q-2\over 2} \lambda^2).
\end{align*}
Hence we have
\begin{align*}
{\rm div}_\nabla S_T(\phi) = {q-2 \over 2} d_B (\lambda^2),
\end{align*}
which finishes the proof.
\end{proof}
Now, we put 
\begin{align}
S_{T,f}(\phi) = e^{-f} S_T(\phi),
\end{align}
called the {\it transversal stress $f$-energy tensor} of $\phi$.  
\begin{lem} Let $\phi:(M,g,\mathcal F,e^{-f}\nu)\to (M',g',\mathcal F')$ be a smooth foliated map. Then
\begin{align*}
({\rm div}_\nabla S_{T,f}(\phi) )(X) = - \langle \tau_{b,f}(\phi),d_T\phi(X)\rangle -\frac12 e^{-f}|d_T\phi|^2 d_B f (X)
\end{align*}
for any normal vector  $X\in\Gamma Q$.
\end{lem}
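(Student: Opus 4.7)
The plan is to first establish the unweighted conservation law $({\rm div}_\nabla S_T(\phi))(X) = -\langle \tau_b(\phi), d_T\phi(X)\rangle$, and then treat the factor $e^{-f}$ as a perturbation via the product rule, with the extra terms conspiring to produce $\tau_{b,f}(\phi)$ plus the exact gradient remainder $\tfrac12 e^{-f}|d_T\phi|^2 d_B f(X)$.

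First I would fix a point $x \in M$ and a local orthonormal basic frame $\{E_a\}$ of $Q$ with $\nabla E_a = 0$ at $x$, so that $({\rm div}_\nabla T)(X) = \sum_a (\nabla_{E_a} T)(E_a, X)$ for any $(0,2)$-tensor $T$ on $Q$. Applied to $S_T(\phi) = \tfrac12 |d_T\phi|^2 g_Q - \phi^* g_{Q'}$, the term coming from $\tfrac12 |d_T\phi|^2 g_Q$ contributes $\tfrac12 X(|d_T\phi|^2)$ since $\nabla g_Q = 0$. For the $\phi^* g_{Q'}$ piece I would expand
\begin{align*}
(\nabla_{E_a}\phi^* g_{Q'})(E_a, X) = g_{Q'}((\nabla_{E_a} d_T\phi)(E_a), d_T\phi(X)) + g_{Q'}(d_T\phi(E_a), (\nabla_{E_a} d_T\phi)(X)),
\end{align*}
then invoke the symmetry $(\nabla_{E_a} d_T\phi)(X) = (\nabla_X d_T\phi)(E_a)$ (already used in the proof of Theorem 3.3) to recognize $\sum_a g_{Q'}(d_T\phi(E_a), (\nabla_X d_T\phi)(E_a)) = \tfrac12 X(|d_T\phi|^2)$. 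The $\tfrac12 X(|d_T\phi|^2)$ pieces cancel and we are left with the unweighted conservation identity $({\rm div}_\nabla S_T(\phi))(X) = -\langle \tau_b(\phi), d_T\phi(X)\rangle$.

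Next I would apply the Leibniz rule $\nabla_{E_a}(e^{-f} S_T(\phi)) = -E_a(f) e^{-f} S_T(\phi) + e^{-f}\nabla_{E_a} S_T(\phi)$ and evaluate at $(E_a, X)$, summing over $a$. The second summand gives $e^{-f}({\rm div}_\nabla S_T(\phi))(X) = -e^{-f}\langle \tau_b(\phi), d_T\phi(X)\rangle$ by the previous step. The first summand is $-e^{-f}\sum_a E_a(f)\,S_T(\phi)(E_a, X) = -e^{-f} S_T(\phi)(\nabla_{\rm tr} f, X)$, which by the definition of $S_T(\phi)$ unfolds to
\begin{align*}
-\tfrac12 e^{-f}|d_T\phi|^2 d_B f(X) + e^{-f} g_{Q'}(d_T\phi(\nabla_{\rm tr} f), d_T\phi(X)).
\end{align*}
Collecting the two $g_{Q'}$-terms gives $-e^{-f}\langle \tau_b(\phi) - d_T\phi(\nabla_{\rm tr} f), d_T\phi(X)\rangle = -\langle \tau_{b,f}(\phi), d_T\phi(X)\rangle$ by the definition of $\tau_{b,f}(\phi)$, which is exactly the claimed formula.

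The proof is essentially a bookkeeping exercise; the only nontrivial step is the symmetry of $\nabla_{\rm tr} d_T\phi$ used to eliminate the $\tfrac12 X(|d_T\phi|^2)$ cross-term, and this is already part of the paper's machinery. I expect no real obstacle beyond keeping signs and $e^{-f}$ factors straight.
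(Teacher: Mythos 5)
Your proposal is correct and follows essentially the same route as the paper: the paper's proof is exactly the Leibniz-rule computation $\nabla_{E_a}(e^{-f}S_T(\phi)) = e^{-f}\nabla_{E_a}S_T(\phi) - E_a(f)e^{-f}S_T(\phi)$ combined with the unweighted identity $({\rm div}_\nabla S_T(\phi))(X)=-\langle\tau_b(\phi),d_T\phi(X)\rangle$ (which the paper simply quotes from the earlier literature while you rederive it). The only difference is that minor self-contained derivation, which is accurate.
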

\begin{proof}  Fix $x$. Let $\{E_a\}$ be a local orthonormal basic frame on $Q$ such that $\nabla E_a=0$ at $x$. Then at $x$,
\begin{align*}
({\rm div}_\nabla S_{T,f}(\phi))(X) &= \sum_a (\nabla_{E_a} S_{T,f}(\phi))(E_a,X)\\ 
&=e^{-f}({\rm div}_\nabla S_T(\phi))(X) - e^{-f}S_T(\phi)(\nabla_{\rm tr} f,X)\\
&= -e^{-f} \langle \tau_b(\phi), d_T\phi(X)\rangle  -e^{-f}\Big(\frac12 |d_T\phi|^2 g_Q -\phi^* g_{Q'}\Big)(\nabla_{\rm tr} f,X)  \\
&=-\langle \tau_b(\phi) - d_T\phi(\nabla_{\rm tr}f), d_T\phi(X)\rangle e^{-f} - \frac12 e^{-f}|d_T\phi|^2 d_B f(X)\\
&=-\langle \tau_{b,f}(\phi),d_T\phi(X)\rangle -\frac12 e^{-f}|d_T\phi|^2 d_Bf (X),
\end{align*}
which finishes the proof. 
\end{proof}
\begin{rem}  Note that  ${\rm div}_\nabla S_T(\phi) (X) =\langle \tau_b(\phi),X\rangle$ for any normal vector field $X$. So a transversally harmonic map satisfies  the transverse conservation law, i.e, ${\rm div}_\nabla S_T (\phi)=0$ \cite{JU3},  but a transversally $f$-harmonic map does not satify the {\it transverse $f$-conservation law}, i.e., ${\rm div}_\nabla S_{T,f}(\phi)=0$ (cf. Lemma 5.6).
\end{rem}
From Lemma 5.6, we have the following Liouville type theorem.
\begin{prop} Let $f$ be a non-constant basic function. If a transversally $f$-harmonic map $\phi:(M,g,\mathcal F, e^{-f}\nu)\to (M',g',\mathcal F')$  satisfies the transverse $f$-conservation law, then $\phi$ is  transversally constant.
\end{prop}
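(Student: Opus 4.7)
The plan is to feed both hypotheses directly into Lemma 5.6. Transversal $f$-harmonicity gives $\tau_{b,f}(\phi)=0$, while the transverse $f$-conservation law gives $\operatorname{div}_\nabla S_{T,f}(\phi)=0$. Substituted into the identity
\[
(\operatorname{div}_\nabla S_{T,f}(\phi))(X) = -\langle \tau_{b,f}(\phi), d_T\phi(X)\rangle - \tfrac{1}{2} e^{-f}|d_T\phi|^2 d_B f(X),
\]
both the left-hand side and the first term on the right-hand side vanish, leaving $\tfrac{1}{2} e^{-f}|d_T\phi|^2 d_B f(X) = 0$ for every normal $X\in\Gamma Q$. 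Since $e^{-f}>0$, this reduces to the pointwise equation $|d_T\phi|^2\, d_B f \equiv 0$ on $M$.

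Next, since $f$ is non-constant and basic, the open set $U:=\{x\in M : d_Bf(x)\neq 0\}$ is non-empty. On $U$ the identity forces $|d_T\phi|^2 = 0$, so $d_T\phi$ vanishes throughout $U$ and $\phi$ is transversally constant on $U$. To promote this conclusion to all of $M$, I would appeal to the unique continuation principle for the second-order transversally elliptic system $\tau_{b,f}(\phi)=0$: once $d_T\phi$ vanishes on a non-empty open subset of the connected manifold $M$, it vanishes everywhere, which is precisely the statement that $\phi$ is transversally constant.

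The main obstacle is this last extension step. The algebraic identity $|d_T\phi|^2 d_B f = 0$ does not by itself rule out $d_T\phi$ being nonzero on the closed set $\{d_B f = 0\}$, so one cannot avoid either a regularity argument on $f$ (for instance, if $\{d_B f \neq 0\}$ is dense then continuity of $d_T\phi$ alone suffices) or a unique-continuation theorem for elliptic systems transferred to the basic/transversal setting. The former is automatic in the transversally real-analytic category; the latter is the more robust but technically heavier route, and it is where I would expect most of the genuine work of the proof to lie.
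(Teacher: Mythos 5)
Your derivation of the pointwise identity $|d_T\phi|^2\, d_B f \equiv 0$ is exactly the paper's argument: the paper offers no proof of this proposition beyond the remark that it follows from Lemma 5.6, i.e.\ from substituting $\tau_{b,f}(\phi)=0$ and ${\rm div}_\nabla S_{T,f}(\phi)=0$ into the divergence formula and using $e^{-f}>0$. The extension step you flag is a genuine issue, and it is one the paper silently skips: the identity only forces $d_T\phi=0$ on the open set $U=\{d_Bf\neq 0\}$, which is non-empty because $f$ is non-constant, but nothing in Lemma 5.6 rules out $d_T\phi\neq 0$ on the interior of the closed set $\{d_Bf=0\}$, where $f$ is locally constant and the $f$-harmonic equation degenerates to the ordinary transversally harmonic one. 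So to the extent your proof is incomplete, it is incomplete in precisely the place where the paper's is: the remedies you name --- density of $U$ in $M$, or a unique continuation theorem for the transversally elliptic system $\tau_{b,f}(\phi)=0$ applied in local quotient charts (in the spirit of Sampson's unique continuation for harmonic maps) --- are the natural ways to close the gap, and neither is carried out, or even mentioned, in the paper. Your write-up is therefore faithful to the intended argument and more honest about what remains to be justified.
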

Let $F:[0,\infty)\to [0,\infty)$ be a $C^2$-function such that $F'>0$ on $(0,\infty)$. The {\it transversally $F$-harmonic map} $\phi:M\to M'$ is a solution of  the Eular-Lagrange equation $\tau_{b,F}(\phi)=0$ \cite{CW}, where $\tau_{b,F}(\phi)$ is the transversal $F$-tension field given by
\begin{align}\label{5-6}
\tau_{b,F}(\phi)  = F' ({|d_T\phi|^2\over 2})\tau_b(\phi) + d_T\phi\Big(\nabla_{\rm tr} F'({|d_T\phi|^2\over 2})\Big).
\end{align}
When $F(s) =s$, the transversal $F$-tension field $\tau_{b,F}(\phi)$ is  the transversal tension field $\tau_b(\phi)$.
\begin{prop} A transversally $F$-harmonic map $\phi: (M,g,\mathcal F)\to (M',g',\mathcal F')$  without critical points  is a transversally $f$-harmonic map with $f= -\ln F'({|d_T\phi|^2\over 2})$.
\end{prop}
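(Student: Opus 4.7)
The plan is to verify directly, by chain-rule computation, that the $f$-tension field equals a strictly positive multiple of the $F$-tension field, so that the vanishing of one is equivalent to the vanishing of the other.

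First I would establish that $f := -\ln F'\!\bigl(\tfrac12|d_T\phi|^2\bigr)$ is a well-defined basic function. Since $\phi$ has no critical points, $|d_T\phi|^2 > 0$ everywhere, and since $F'>0$ on $(0,\infty)$, the logarithm is defined. The function $|d_T\phi|^2$ is basic because $\phi$ is a foliated map (so $d_T\phi$ is a section of $Q^*\otimes \phi^{-1}Q'$ whose pointwise norm is invariant along leaves), and post-composition with the smooth map $s\mapsto -\ln F'(s/2)$ preserves basicness.

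Next I would compute $\nabla_{\rm tr} f$. By the chain rule,
\begin{align*}
\nabla_{\rm tr} f = -\frac{\nabla_{\rm tr} F'\!\bigl(\tfrac12|d_T\phi|^2\bigr)}{F'\!\bigl(\tfrac12|d_T\phi|^2\bigr)},
\end{align*}
and $e^{-f} = F'\!\bigl(\tfrac12|d_T\phi|^2\bigr)$. Substituting these into the defining formula
\begin{align*}
\tau_{b,f}(\phi) = \bigl(\tau_b(\phi) - d_T\phi(\nabla_{\rm tr} f)\bigr)e^{-f}
\end{align*}
yields, after clearing the common factor,
\begin{align*}
\tau_{b,f}(\phi) = F'\!\bigl(\tfrac12|d_T\phi|^2\bigr)\tau_b(\phi) + d_T\phi\!\left(\nabla_{\rm tr} F'\!\bigl(\tfrac12|d_T\phi|^2\bigr)\right) = \tau_{b,F}(\phi),
\end{align*}
which is exactly the expression (\ref{5-6}) for the transversal $F$-tension field. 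Hence $\tau_{b,F}(\phi)=0$ forces $\tau_{b,f}(\phi)=0$, proving that $\phi$ is transversally $f$-harmonic for this choice of $f$.

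There is essentially no obstacle here: the statement is a bookkeeping identity built on the algebraic coincidence $e^{-f}=F'(\tfrac12|d_T\phi|^2)$. The only delicate point worth mentioning is the hypothesis ``without critical points,'' which is used precisely to guarantee that $f$ and $\nabla_{\rm tr} f$ are smooth (so that the chain-rule manipulation above is legitimate everywhere on $M$); if $d_T\phi$ vanished on an open set, $F'$ could take the value $F'(0)$ and the logarithm might fail to be smooth or $f$ might fail to be defined at all.
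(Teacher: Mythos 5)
Your proposal is correct and is essentially the paper's own argument: the paper simply observes that with $f=-\ln F'(\tfrac12|d_T\phi|^2)$ one has $\tau_{b,F}(\phi)=\tau_{b,f}(\phi)$, which is exactly the chain-rule identity you verify (using $e^{-f}=F'(\tfrac12|d_T\phi|^2)$). Your additional remarks on where the no-critical-points hypothesis enters are a sensible elaboration but not a different route.
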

\begin{proof}  If $f= -\ln F'({|d_T\phi|^2\over 2})$ in (\ref{5-6}), then $\tau_{b,F}(\phi) =\tau_{b,f}(\phi)$.  So the proof is trivial.
\end{proof}
Now, we define the  {\it transversal $F$-stress energy} tensor $S_{T,F}(\phi)$ by
\begin{align}
S_{T,F}(\phi) = F({|d_T\phi|^2\over 2}) g_Q - F'({|d_T\phi|^2\over 2}) \phi^* g_{Q'}.
\end{align}
Then we have the following Lemma \cite{CW}.
\begin{lem}  Let $\phi: (M,g,\mathcal F)\to (M',g',\mathcal F')$ be a smooth foliated map. Then
\begin{align}
{\rm div}_\nabla S_{T,F}(\phi) = -\langle \tau_{b,F}(\phi), d_T\phi\rangle.
\end{align}
\end{lem}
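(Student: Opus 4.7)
The plan is to compute $\mathrm{div}_\nabla S_{T,F}(\phi)$ in a local orthonormal basic frame, extract an explicit formula component by component, and then match the result against the definition of $\tau_{b,F}(\phi)$ in \eqref{5-6}. Structurally this extends Lemma 5.6: the weights $F$ and $F'$ of $e := |d_T\phi|^2/2$ now play the role of the conformal factor $e^{-f}$, and the mechanism driving the identity is a cancellation between a $\nabla F(e)$ term and one of the pieces coming from $\sum_a \nabla_{E_a}\phi^*g_{Q'}(E_a, E_b)$.

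To set things up, fix $x \in M$ and choose a local orthonormal basic frame $\{E_a\}$ of $Q$ with $\nabla E_a = 0$ at $x$; all computations will be performed at $x$. Write $\psi_{ab} := g_{Q'}(d_T\phi(E_a), d_T\phi(E_b))$, so that $S_{T,F}(\phi)(E_a, E_b) = F(e)\delta_{ab} - F'(e)\psi_{ab}$. Since $g_Q$ is parallel, differentiating componentwise and tracing in $a$ yields
\begin{align*}
(\mathrm{div}_\nabla S_{T,F}(\phi))(E_b) = F'(e)\nabla_b e - F''(e)\sum_a (\nabla_a e)\,\psi_{ab} - F'(e)\sum_a \nabla_a \psi_{ab},
\end{align*}
where $\nabla_a := \nabla_{E_a}$.

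The decisive step is to evaluate $\sum_a \nabla_a \psi_{ab}$. By the Leibniz rule and $\nabla E_a = 0$ at $x$,
\begin{align*}
\sum_a \nabla_a \psi_{ab} = \sum_a g_{Q'}\bigl((\nabla_{E_a} d_T\phi)(E_a),\, d_T\phi(E_b)\bigr) + \sum_a g_{Q'}\bigl(d_T\phi(E_a),\, (\nabla_{E_a} d_T\phi)(E_b)\bigr).
\end{align*}
The first sum is $\langle \tau_b(\phi), d_T\phi(E_b)\rangle$ by \eqref{eq3-3}. For the second, I would use the symmetry $(\nabla_{E_a} d_T\phi)(E_b) = (\nabla_{E_b} d_T\phi)(E_a)$ already invoked in the proof of Theorem 3.3 to rewrite it as $\sum_a g_{Q'}(d_T\phi(E_a), (\nabla_{E_b} d_T\phi)(E_a)) = \tfrac12 \nabla_b |d_T\phi|^2 = \nabla_b e$. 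Substituting back, the two $F'(e)\nabla_b e$ contributions cancel and there remains
\begin{align*}
(\mathrm{div}_\nabla S_{T,F}(\phi))(E_b) = -F'(e)\langle \tau_b(\phi), d_T\phi(E_b)\rangle - F''(e)\sum_a (\nabla_a e)\,\psi_{ab}.
\end{align*}

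To close, I would identify the right-hand side with $-\langle \tau_{b,F}(\phi), d_T\phi(E_b)\rangle$. Using $\nabla_{\mathrm{tr}} F'(e) = F''(e)\nabla_{\mathrm{tr}} e$ and \eqref{5-6}, the second term of $\tau_{b,F}(\phi)$ contracted with $d_T\phi(E_b)$ contributes exactly $F''(e)\sum_a (\nabla_a e)\,\psi_{ab}$, and the first contributes $F'(e)\langle \tau_b(\phi), d_T\phi(E_b)\rangle$; this matches the display above and yields the lemma. The only delicate point is the symmetry step for $\nabla_{\mathrm{tr}} d_T\phi$, which makes the $F'(e)\nabla_b e$ cancellation work; everything else is bookkeeping in the frame.
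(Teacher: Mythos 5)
Your computation is correct and is essentially the paper's own proof: both expand $\mathrm{div}_\nabla S_{T,F}(\phi)$ in a parallel orthonormal basic frame, use the symmetry of $\nabla_{\rm tr}d_T\phi$ to identify $\sum_a g_{Q'}(d_T\phi(E_a),(\nabla_{E_a}d_T\phi)(E_b))$ with $\nabla_b e$, and cancel that against the chain-rule term $\nabla_b F(e)=F'(e)\nabla_b e$. The only cosmetic difference is that you expand $\nabla_{\rm tr}F'(e)$ as $F''(e)\nabla_{\rm tr}e$ while the paper leaves it unexpanded when matching against \eqref{5-6}.
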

\begin{proof} 
Fix $x$. Let $\{E_a\}$ be a local orthonormal basic frame  such that $\nabla E_a=0$ at $x$. Then at $x$,
\begin{align*}
({\rm div}_\nabla S_{T,F}(\phi))(E_a)&=\sum_b\nabla_{E_b}S_{T,F}(E_b,E_a)\\
 &=\sum_b \nabla_{E_b}\Big( F({|d_T\phi|^2\over 2})\delta_{ab} - F'({|d_T\phi|^2\over 2})\phi^*g_{Q'}(E_a,E_b)\Big)\\
 &=g_Q (\nabla_{\rm tr} F({|d_T\phi|^2\over 2}),E_a) -g_{Q'}( d_T\phi \Big(\nabla_{\rm tr} F'({|d_T\phi|^2\over 2})\Big),d_T\phi(E_a))\\
&-  g_{Q'} (F'({|d_T\phi|^2\over 2}) \tau_b(\phi),d_T\phi(E_a)) - \frac12 g_Q (F'({|d_T\phi|^2\over 2})\nabla_{\rm tr} {|d_T\phi|^2}, E_a).
\end{align*}
On the other hand, by the chain rule, we have
\begin{align*}
g_Q (\nabla_{\rm tr} F({|d_T\phi|^2\over 2}),E_a)&=E_a [F({|d_T\phi|^2\over 2})]\\
& = F'({|d_T\phi|^2\over 2}) E_a ({|d_T\phi|^2\over 2})\\
& = \frac12 g_Q (F'({|d_T\phi|^2\over 2})\nabla_{\rm tr}{|d_T\phi|^2},E_a).
\end{align*}
Hence from the above equations, we have
\begin{align*}
({\rm div}_\nabla S_{T,F}(\phi))(E_a)&=  -g_{Q'}( d_T\phi \Big(\nabla_{\rm tr} F'({|d_T\phi|^2\over 2})\Big) +F'({|d_T\phi|^2\over 2}) \tau_b(\phi),d_T\phi(E_a))\\
&=-  g_{Q'} (\tau_{b,F}(\phi), d_T\phi (E_a)),
\end{align*}
which finishes the proof.
\end{proof}
If $\phi:M\to M'$ satisfies ${\rm div}_\nabla  S_{T,F}(\phi) =0$, then we say $\phi$ satisfies the {\it transverse $F$-conservation law}.  Then we have the following. 
\begin{prop}  A transversally $F$-harmonic map satisfies the transverse $F$-conservation law. In particular,  if $f=-\ln F'({|d_T\phi|^2\over 2})$, then a transversally $f$-harmonic map satisfies the transverse $F$-conservation law, that is, ${\rm div}_\nabla S_{T,F}(\phi)=0$.
\end{prop}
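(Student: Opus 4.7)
The proof proposal rests on the fact that the two assertions reduce almost immediately to Lemma 5.10 and Proposition 5.9, so no hard calculations are needed. The strategy is simply to connect the three objects $\tau_{b,F}(\phi)$, $\tau_{b,f}(\phi)$, and ${\rm div}_\nabla S_{T,F}(\phi)$ via the identities already established.

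For the first assertion, I would start from Lemma 5.10, which gives the identity
\[
{\rm div}_\nabla S_{T,F}(\phi) = -\langle \tau_{b,F}(\phi), d_T\phi\rangle
\]
for any smooth foliated map $\phi$. If $\phi$ is transversally $F$-harmonic, then by definition $\tau_{b,F}(\phi)=0$, so the right-hand side vanishes identically and ${\rm div}_\nabla S_{T,F}(\phi)=0$. This is the transverse $F$-conservation law.

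For the second assertion, the key observation is already recorded in Proposition 5.9: with the choice $f = -\ln F'(|d_T\phi|^2/2)$, one has $\tau_{b,F}(\phi) = \tau_{b,f}(\phi)$ (up to the positive factor $e^{-f} = F'(|d_T\phi|^2/2) > 0$, which is absorbed into the definition of $\tau_{b,f}$). Therefore, assuming $\phi$ is transversally $f$-harmonic, i.e.\ $\tau_{b,f}(\phi) = 0$, we conclude $\tau_{b,F}(\phi) = 0$, and plugging this into Lemma 5.10 again yields ${\rm div}_\nabla S_{T,F}(\phi) = 0$.

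There is essentially no obstacle here; the only point worth double-checking is the compatibility of sign conventions between $\tau_{b,f}(\phi) = (\tau_b(\phi) - d_T\phi(\nabla_{\rm tr}f))e^{-f}$ and $\tau_{b,F}(\phi) = F'(|d_T\phi|^2/2)\tau_b(\phi) + d_T\phi(\nabla_{\rm tr}F'(|d_T\phi|^2/2))$ under the substitution $f = -\ln F'(|d_T\phi|^2/2)$, which uses $\nabla_{\rm tr}f = -\nabla_{\rm tr}F'(|d_T\phi|^2/2)/F'(|d_T\phi|^2/2)$. Once this is verified (and it is essentially the content of Proposition 5.9), the proposition follows in a single line from Lemma 5.10.
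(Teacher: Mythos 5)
Your argument is exactly the paper's: the paper proves this proposition by citing Proposition 5.9 and Lemma 5.10, which is precisely the reduction you carry out (and your verification that $\tau_{b,f}(\phi)=\tau_{b,F}(\phi)$ under $f=-\ln F'(|d_T\phi|^2/2)$ is just the content of Proposition 5.9 made explicit). The proposal is correct and takes the same route.
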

\begin{proof} From Proposition 5.9 and Lemma 5.10, the proof follows.
\end{proof}


\begin{thebibliography}{99}

\bibitem{Lop} J. A. Alvarez {L\'opez},
\emph{The basic component of the mean curvature of Riemannian foliations}, Ann. Global Anal. Geom. 10 (1992), 179--194.

\bibitem{BE}  D. Bakry and M. \'Emery, \emph{Diffusions hypercontractives}, S\'eminaire de probabilit\'es XIX, Lect.  Notes Math. 1123, 1983/1984, 177-206.

\bibitem{BE1} P. {B\'erard},
\emph{A note on Bochner type theorems for complete manifolds}, Manuscripta Math. 69 (1990), 261--266.

\bibitem{CZ} Q. Chen and W. Zhou,
\emph{Bochner-type formulas for transversally harmonic maps}, Int. J. Math. 23 (2012), 1250003.

\bibitem{CW} Y. Chiang and R. Wolak, 
\emph{Transversally $f$-harmonic and transversally $f$-biharmonic maps between foliated manifolds}, JP J. Geom. Top. 13 (2013), 93-117.

\bibitem{DO} D. {Dom\'inguez},
\emph{A tenseness theorem for Riemannian foliations}, C. R. Acad.
Sci. Paris {S\'er}. I Math. 320(1995), 1331--1335.

\bibitem{DT} S. Dragomir and A. Tommasoli,
\emph{Harmonic maps of foliated Riemannian manifolds}, Geom Dedicata. 162 (2013), 191--229.

\bibitem{ES} J. Eells and J. H. Sampson,
\emph{Harmonic mappings of Riemannian manifolds}, Amer. J. Math. 86 (1964), 106--160.

\bibitem{EL}
J. Eells and L. Lemaire, \emph{Selected topics in harmonic maps},  CBMS Reg. Conf. Ser. Math. 50 (1983), published for the Conference Board of the Mathematical Sciences, Washington, D.C.

\bibitem{FJ} X. S. Fu and S. D. Jung,
\emph{Liouville type theorem for transversally harmonic maps}, J. Geom. 113 (2022), 2.


\bibitem{JU1} S. D. Jung,
\emph{Harmonic maps of complete Riemannian manifolds}, Nihonkai Math. J. 8 (1997), 147--154.

\bibitem{JU2} S. D. Jung,
\emph{The first eigenvalue of the transversal Dirac operator}, J. Geom. Phys. 39 (2001), 253--264.

\bibitem{JU3}
S. D. Jung, \emph{Variation formulas for transversally harmonic and biharmonic maps}, J. Geom. Phys. 70 (2013), 9-20

\bibitem{JJ1} M. J. Jung and S. D. Jung,
\emph{On transversally harmonic maps of foliated Riemannian manifolds}, J. Korean Math. Soc. 49 (2012), 977--991.

\bibitem{JJ2} M. J. Jung and S. D. Jung,
\emph{Liouville type theorems for transversally harmonic and biharmonic maps}, J. Korean Math. Soc. 54 (2017), 763--772.

\bibitem{JR} S. D. Jung and K. Richardson,
\emph{The mean curvature of transverse  K\"{a}hler foliations}, Documenta Math. 24 (2019), 995-1031.

\bibitem{JLR} S. D. Jung, K. R. Lee and K. Richardson, 
\emph{Generalized Obata theorem and its applications on foliations}, J. Math. Anal. Appl. 376 (2011), 129-135.

\bibitem{Kamber2} F. W. Kamber and Ph. Tondeur,
\emph{Infinitesimal automorphisms and second variation of the energy for harmonic foliations}, T\^ohoku Math. J. 34 (1982), 525--538.

\bibitem{KW1} J. Konderak and R. Wolak,
\emph{Transversally harmonic maps between manifolds with Riemanian foliations},  Quart. J. Math. Oxford Ser. (2) 54 (2003), 335-354

\bibitem{KW2} J. Konderak and R. Wolak,
\emph{Some remarks on transversally harmonic maps}, Glasgow Math. J. 50 (2008), 1--16.

\bibitem{LI}
A. Lichnerowicz, \emph{Applications harmoniques  et vari\'et\'es Kahleriennes}, Symposia  Mathematica III (INDAM,Rome, 1968/69),  Academic Press, London, 1968/69, 341-402.

\bibitem{LI1} A. Lichnerowicz, \emph{Vari\'et\'es riemanniennes \'a tenseur C non n\'egatif}, C.R. Acad. Sci. Paris, S\'er. A-B 271 (1970), A650-A653.

\bibitem{LIN}
 D. Lin,  \emph{Transverse $\mathcal F^T$-entropy and transverse Ricci flow for Riemannian foliations}, arXiv:2104.10945v1.

\bibitem{LO}
J. Lott, \emph{Some geometric properties of the Bakry-Emery-Ricci tensor}, Comment. Math. Helv. 78 (2003), 865-883.

\bibitem{LV}
J. Lott and C. Villani, \emph{Ricci curvature for metric-measure spaces via optimal transport}, Ann. of Math. 169 (2009), 903-991.

\bibitem{MMR} P. March, M. Min-Oo and E. A. Ruh,
\emph{Mean curvature of Riemannian foliations}, Canad. Math. Bull.
39(1996), 95--105.


\bibitem{MA} A. Mason,
\emph{An appliation of stochastic flows to Riemannian foliations},
Houston J. Math. 26(2000), 481--515.


\bibitem{Molino}P. Molino,
\emph{Riemannian foliations}, translated from the French by Grant Cairns, Boston: Birkh\"aser, 1988.

\bibitem{EN} E. Nelson,
\emph{A proof of Liouville's theorem}, Proc. Amer. Math. Soc. 12 (1961), 995.


\bibitem{OSU} S. Ohno, T. Sakai and H. Urakawa,
\emph{Harmonic maps and bi-harmonic maps on CR-manifolds and foliated Riemannian manifolds}, J. App. Math. Phys. 4 (2016), 2272--2289.

\bibitem{PJ} J. Pak and S. D. Jung,
\emph{A transversal Dirac operator and some vanishing theorems on a complete foliated Riemannian manifold}, Math. J. Toyama Univ. 16 (1993), 97-108.


\bibitem{PR}E. Park and K. Richardson,
\emph{The basic Laplacian of a Riemannian foliation}, Amer. J. Math. 118 (1996), 1249--1275.


\bibitem{RV} M. Rimoldi and G. Veronelli, \emph{Topology  of steady and expanding  gradient Ricci solitons via $f$-harmonic maps}, Diff. Geom. its Appl. 31 (2013), 623-638.

\bibitem{SY} R. Schoen and S. T. Yau,
\emph{Harmonic maps and the topology of stable hypersurfaces and manifolds of non-negative Ricci curvature}, Comm. Math. Helv. 51 (1976), 333--341.

\bibitem{ST} K.-T. Sturm,
\emph{On the geometry of metric measure spaces I}, Acta Math. 196 (2006), 65-131.

\bibitem{ST2} K.-T. Sturm,
\emph{On the geometry of metric measure spaces II}, Acta Math. 196 (2006), 133-177.


\bibitem{Tond} Ph. Tondeur,
\emph{Foliations on Riemannian manifolds}, New-York: Springer-Verlag, 1988.


\bibitem{Tond1} Ph. Tondeur,
\emph{Geometry of foliations}, Basel: Birkh\"auser Verlag, 1997.

\bibitem{VI} M. Vieira, 
\emph{Harmonic forms on manifolds with non-negative Bakry-Emery-Ricci curvature}, Arch. Math. (Basel) 101 (2013), 581-590.

\bibitem{WX} G. Wang and D. Xu, \emph{Harmonic maps from smooth metric measure spaces}, Int. J. Math. 23 (2012) 1250095, 21pp.

\bibitem{WW} G. Wei and W. Wylie,
\emph{Comparison geometry for the Bakry-Emery Ricci tensor}, J. Differential Geom. 83 (2009), 377-405.

\bibitem{YA}S. T. Yau,
\emph{Harmonic functions on complete Riemannian manifolds}, Comm. Pure Appl. Math. 28 (1975), 201--228.

\bibitem{Y1} S. Yorozu,
\emph{Notes on square-integrable cohomology spaces on certain foliated manifolds}, Trans. Amer. Math. Soc. 255 (1979), 329--341.

\bibitem{Yorozu}S. Yorozu and T. Tanemura,
\emph{Green's theorem on a foliated Riemannian manifold and its applications}, Acta Math. Hungar. 56 (1990), 239--245.


\end{thebibliography}
\end{document}